\title{Almost homogeneous manifolds with boundary}
\author{Beno\^{\i}t Kloeckner}
\newcommand{\moduli}{\mathop {\mathscr{M}}\nolimits}
\newcommand{\stab}{\mathop {\mathrm{Stab}}\nolimits}
\begin{document}

\maketitle

\section{Introduction}

In this article we study the classification of some 
Lie group actions on differentiable manifolds, up to
conjugacy. Let us start with the precise notion of conjugacy
we shall use.

Two $\diffb{r}$ actions $\rho_1$, $\rho_2$ of a Lie group $G$ on a manifold $M$
are said to be $\diffb{r}$ \defini{conjugate} if there is a $\diffb{r}$ diffeomorphism
$\Phi : M\to M$ that is $(\rho_1,\rho_2)$ equivariant, that is:
\[\Phi(\rho_1(g)x)=\rho_2(g)\Phi(x)\qquad\forall x\in M\,\forall g\in G.\]
A conjugacy class will be denoted with brackets: $[\rho_i]$.

These two actions are said to be \defini{topologically conjugate} if $\Phi$ is only
a homeomorphism.

When $\Phi$ is both a $\diffb{r}$ map 
and a homeomorphism (but not necessarily a diffeomorphism), we say that $\rho_1$
is $\diffb{r}$ semi-conjugate to $\rho_2$.

Note that $r$ is either a positive integer, $\infty$ or $\omega$ ($\diffb{\omega}$
meaning real analytic), and that $M$ is possibly with boundary.

Given a $\diffb{r}$ action $\rho_0$, we are interested in the quotient set 
$\moduli^r(\rho_0)$ of all actions that are topologically conjugate
to $\rho_0$, modulo $\diffb{r}$ conjugacy. Can we entirely describe $\moduli^r(\rho_0)$,
determine its size, endow it with some natural structure ?

For a general action these questions may be too wide to be answered, and
we shall tackle specific cases. The case of transitive actions is easily dealt with: 
the manifold is then a homogeneous space, and it is sufficient
to give the stabilizer of a point to completely describe the action. The
set $\moduli^r(\rho_0)$ is therefore reduced to $[\rho_0]$.

As soon as there are two orbits, the question is not so simple.
Not only does one need a stabilizer for each orbit, but several 
non differentiably conjugate actions can have the same pair of 
stabilizers. Let us consider the ``differentiable
compactifications'' of rank one symmetric spaces.

A \defini{differentiable compactification} of a non-compact homogeneous 
space $X=G/H$ is an identification of $X$ with the interior of a manifold 
with boundary on which the action of $G$ extends differentiably (see Definition \ref{diff_comp}).
Differentiability can be replaced by other regularity assumptions, mainly
smoothness ($\diffb{\infty}$) and analyticity ($\diffb{\omega}$).
Every non-compact rank one symmetric space $\KH^n$ admits an analytic compactification,
obtained from its Klein ball model, and called its projective compactification.
Since such a space is an isotropic Riemannian manifold, the action of its
isometry group on this compactification has two orbits: one is the interior
$\KH^n$, the other is its geodesic boundary.

This particular case fits into the following framework. Let $\rho_0$ be a $\diffb{r}$ action 
of a connected Lie group $G$ on a
manifold with boundary $M$ of dimension $n$. Denote by $\Int(M)$ the interior
of $M$ and by $\partial M$ its boundary. When $\rho_0$ is transitive on 
$\Int(M)$, we say that $M$ (together with the action) is
an \defini{almost homogeneous manifold with boundary}. This property is assumed to hold from now on.
Moreover, we set $r=\infty$ or $\omega$.

For some $\diffb{r}$ functions $f:[0,1]\to[0,1]$
(the typical case being $f(y)=y^p$ with $p\in\mN$)
we define the ``stretch'' of $\rho_0$ by $f$, a new action of $G$ on $M$ that is topologically
conjugate to $\rho_0$ (see Proposition \ref{construction}, Definition \ref{stretch}
and Theorem \ref{regularity}). 

Our main result is a generalisation of that
of \cite{Kloeckner2}, where we described the case of the real hyperbolic spaces $\RH^n$.
\begin{theorem*}
If $r=\omega$, or $r=\infty$ and $\mK=\mH$ or $\mO$,
then any $\diffb{r}$ compactification of the non-compact
symmetric space $\KH^m$ is
a stretch of its projective compactification.
Moreover two different stretches give non-conjugate compactifications.
\end{theorem*}
In particular, this shows that if $\rho_0$ is the projective compactification 
of a rank one symmetric space, then $\moduli^\omega(\rho_0)$ is countably infinite.
Note that here, two stretches are considered to be different if they
are build from non-equivalent stretching function (see Definition \ref{stretch_func}).

The tools we use are roughly the same as in  \cite{Kloeckner2}, but we try to
apply them to the broader context of almost homogeneous manifolds
with boundary. This generalization is only partial: in particular, we need
to assume an algebraic condition (see Section \ref{condition_A})
to ensure that $\moduli^\omega(\rho_0)$ is infinite.

\section{Stretching an action}

A stretching of $\rho_0$ consists of gluing together
its two orbits in a new way. To achieve this, we need a function that indicates ``at what speed''
we glue them.

\begin{definition}\label{stretch_func}
A \defini{stretching function} is a function $f:[0,1]\to[0,1]$ such that
\begin{enumerate}
\item $f$ is an orientation-preserving homeomorphism,
\item $f$ is a $\diffb{r}$ function and the restriction of $f$ to $]0,1]$ is a $\diffb{r}$ diffeomorphism
\item the function $f/f'$, which is well defined on $]0,1]$, can be extended
to a $\diffb{r}$ function at $0$.
\end{enumerate}
We say that a stretching map $f$ is \defini{trivial} when it is a diffeomorphism (that is,
when $f'(0)\neq 0$), and that two stretching maps $f,g$ are \defini{equivalent} if $g^{-1}f$ is a 
diffeomorphism.
\end{definition}

The reason why we need the last condition will become clear in the proof of Theorem \ref{regularity}.
This condition is satisfied by any non-flat (that is: having non-trivial
Taylor series) function. In the real-analytic case, any stretching function is equivalent to $y\mapsto y^p$
for some $p\in\mN$.

Given a stretching function, we define a map $M\to M$ that will relate $\rho_0$ and its
stretching. Note that to simplify the construction of manifolds, we consider charts
with values not only in $\mR^n$, but in any manifold.

\begin{proposition}\label{construction}
Let $f$ be a stretching function. There is a homeomorphism $\Phi_f:M\to M$ that
is a $\diffb{r}$ map, whose restriction $\Int(M)\to\Int(M)$ is a $\diffb{r}$ diffeomorphism,
and such that for every point $p\in\partial M$ there exist two systems of coordinates 
$(x_1,\dots,x_{n-1},y)$ around $p$ and $\Phi_f(p)$, 
where $y$ is a locally defining function for $\partial M$ and 
such that in these coordinates
\[\Phi_f(x_1,\dots,x_{n-1},y)=(x_1,\dots,x_{n-1},f(y)).\]
\end{proposition}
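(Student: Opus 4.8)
The plan is to build $\Phi_f$ inside a collar neighbourhood of $\partial M$ and to let it be the identity (or, when $r=\omega$, a suitable analytic map) away from the boundary; the cases $r=\infty$ and $r=\omega$ need a different treatment, since the soft gluing available in the smooth case has no analytic counterpart. First I would fix a collar, i.e.\ a $\diffb{r}$ embedding $c\colon\partial M\times[0,1[\hookrightarrow M$ onto a neighbourhood of $\partial M$ with $c(\,\cdot\,,0)=\mathrm{id}_{\partial M}$: for $r=\infty$ this is the collar theorem, and for $r=\omega$ one uses that the inward normal line bundle of $\partial M$ is trivial, so an analytic collar can be produced, for instance from an analytic Riemannian metric and its normal geodesic flow. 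Denote by $y$ the second collar coordinate; it is a local defining function for $\partial M$. The map we want will read, in these coordinates, $c(q,y)\mapsto c(q,\psi(y))$ for an appropriate reparametrisation $\psi\colon[0,1[\to[0,1[$ of the collar direction.

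\emph{The case $r=\infty$.} Choose $0<a<b<1$ with $f(a)<b$, and let $\psi$ coincide with $f$ on $[0,a]$, with $\mathrm{id}$ on $[b,1[$, and interpolate on $[a,b]$ by an increasing $\diffb{\infty}$ function agreeing with these to infinite order at both endpoints (the matching at $a$ is legitimate because there $f$ is a $\diffb{\infty}$ diffeomorphism by condition~(2); monotonicity is secured by instead prescribing a positive $\diffb{\infty}$ function $\psi'$ with the right integral over $[a,b]$). Set $\Phi_f:=c\circ(\mathrm{id}_{\partial M}\times\psi)\circ c^{-1}$ on the collar and $\Phi_f:=\mathrm{id}$ elsewhere; the two agree where $y>b$, so this defines a self-map of $M$. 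One then checks directly that $\Phi_f$ is a homeomorphism (its inverse is built the same way from $\psi^{-1}$, only continuity being needed), a $\diffb{\infty}$ map (on the collar it is $c$ followed by $\mathrm{id}\times\psi$, and $\psi$ is $\diffb{\infty}$ on $[0,1[$ by condition~(2)), and a $\diffb{\infty}$ diffeomorphism of $\Int(M)$ (there $y>0$, and $\psi$ restricted to $]0,1[$ is a $\diffb{\infty}$ diffeomorphism). Finally $\Phi_f(p)=p$ for $p\in\partial M$ since $\psi(0)=0$, and in the coordinates given by the collar composed with a chart of $\partial M$ near $p$ — one chart serving for both $p$ and $\Phi_f(p)=p$ — shrunk so that $y<a$, the map reads $(x_1,\dots,x_{n-1},y)\mapsto(x_1,\dots,x_{n-1},f(y))$.

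\emph{The case $r=\omega$.} No cut-off is now available, so $\Phi_f$ has to come from a single analytic formula; I would obtain it from a flow. Fix a global analytic defining function $\rho\colon M\to[0,1[$ for $\partial M$ with $d\rho\neq 0$ along $\partial M$ (such a function exists for real-analytic manifolds with boundary; one may e.g.\ antisymmetrise a suitable analytic function under the canonical involution of the double of $M$ and compose with $t\mapsto t/(1+t)$), together with a complete analytic vector field $\xi$ on $M$ that vanishes on $\partial M$ and satisfies $\xi\rho=\rho$ near $\partial M$ — a ``normal Euler field'' of $\partial M$. Its flow $\phi_s$ fixes $\partial M$ pointwise and contracts a neighbourhood of it onto $\partial M$ as $s\to-\infty$; since $\partial M$ is a manifold of fixed points with a single, positive normal eigenvalue, its unstable foliation is analytic, yielding near each $p\in\partial M$ analytic coordinates $(x_1,\dots,x_{n-1},\rho)$ in which $\xi=\rho\,\partial_\rho$, so that $\phi_s(x,\rho)=(x,e^s\rho)$. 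Setting $h(t):=\log\big(f(t)/t\big)$, which is analytic on $]0,1]$, and $\Phi_f(q):=\phi_{h(\rho(q))}(q)$, the apparent singularity of $h$ at $0$ does no harm: in the above coordinates $\Phi_f(x,\rho)=(x,e^{h(\rho)}\rho)=(x,f(\rho))$, which is analytic and already in the required normal form (again the same chart serves for $p$ and $\Phi_f(p)=p$, after shrinking). On $\Int(M)$, where $\rho>0$, the map $q\mapsto\phi_{h(\rho(q))}(q)$ is a composition of analytic diffeomorphisms and is a diffeomorphism (running the flow backwards along the same orbits gives the inverse); hence $\Phi_f$ is a homeomorphism of $M$.

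The step I expect to be the main obstacle is exactly this analytic construction: producing \emph{global} analytic data — the defining function $\rho$ and, above all, a complete analytic vector field $\xi$ with $\xi\rho=\rho$ near $\partial M$ — prescribed near $\partial M$ but with no freedom to cut off. Extending the obvious local model $\rho\,\partial_\rho$ to all of $M$ as a complete analytic vector field requires genuine real-analytic input (vanishing of a suitable first cohomology group, or an analytic approximation theorem relative to $\partial M$), whereas the corresponding step for $r=\infty$ is trivial; and verifying that the resulting $\Phi_f$ is a diffeomorphism of the \emph{whole} interior, not only near $\partial M$, is the other point calling for care.
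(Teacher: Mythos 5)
Your $r=\infty$ construction is correct and essentially equivalent to the paper's: a collar plus a cut-off making the map equal to $f$ in the normal coordinate near $\partial M$ and the identity away from it. (The paper packages this differently — it re-glues the charts $\Int(M)$ and the collar $C$ via $(x,y)\mapsto c(x,f(y))$ to form a new manifold $M'$, shows $M'$ is diffeomorphic to $M$, and takes $\Phi_f$ to be the extension of $\mathrm{id}_{\Int(M)}$ — but in the smooth case the content is the same.)

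The analytic case is where there is a genuine gap, and it is not only the extension problem you flag. Even granting a global analytic defining function $\rho$ and a complete analytic vector field $\xi$ with $\xi\rho=\rho$ near $\partial M$, the heart of your argument is the assertion that near each boundary point there exist \emph{analytic} coordinates in which $\xi=\rho\,\partial_\rho$. The flow relation $\rho\circ\phi_s=e^s\rho$ only controls the normal coordinate; for the transverse coordinates $x_i$ to be preserved they must be analytic first integrals of $\xi$, i.e.\ the strong unstable foliation of the fixed-point manifold $\partial M$ must be analytic. That is exactly the kind of boundary-regularity statement the proposition is trying to establish, so it cannot be taken for granted; without it, $\Phi_f(q)=\phi_{h(\rho(q))}(q)$ involves flowing for a time tending to $-\infty$ as $q\to\partial M$, and its analyticity (indeed its differentiability) up to the boundary is unproven. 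The paper sidesteps the need for any global analytic formula by exploiting the freedom, explicitly built into the statement, to use \emph{different} coordinate systems at $p$ and at $\Phi_f(p)$: one defines a new analytic manifold $M'$ by gluing the charts $\Int(M)$ and $C$ via $(x,y)\mapsto c(x,f(y))$, so that the identity of $\Int(M)$ tautologically extends to a $\diffb{\omega}$ map $M'\to M$ reading $(x,y)\mapsto(x,f(y))$ in the two collar charts, and one then identifies $M'$ with $M$ by \emph{some} analytic diffeomorphism, whose existence is guaranteed by the uniqueness of the analytic structure on a smooth manifold with boundary (Theorem \ref{analytic_theorem}, via Grauert--Morrey--Tognoli). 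If you want to keep your direct approach, you would need to supply the analytic normal form for $\xi$ along $\partial M$ (or an equivalent analytic collar adapted to $\rho$), which is the missing step.
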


Since the latter formula is valid in two different systems of coordinates on the domain and
the range of $\Phi_f$, this map should be considered as being defined up to composition with
diffeomorphisms on the left and right, and not up to conjugacy. This is natural since
we shall use it to pull back $\rho_0$, so the domain and range should be considered as
two different copies of $M$.

\begin{proof}
Let $f$ be a stretching map. Let $C$ be collar neighborhood of $\partial M$ in $M$,
parametrized by coordinates $(x,y)\in\partial M\times [0,1[$. Note that we ask that
a collar neighborhood has its complement diffeomorphic to $M$.

 Let 
$c:\partial M\times ]0,1[\to\Int(M)$ be the inclusion map (where $\partial M\times ]0,1[$
is identified with $C\cap\Int(M)$ \lat{via} the coordinates). Define a $\diffb{r}$ manifold with boundary 
$M'$ by two charts that are copies of $\Int(M)$ and $C$, with change of coordinates given by
\begin{eqnarray*}
 \partial M\times ]0,1[ &\to& \Int(M) \\
  (x,y) &\mapsto& c(x,f(y))
\end{eqnarray*}

\begin{figure}[htb]\begin{center}
\input{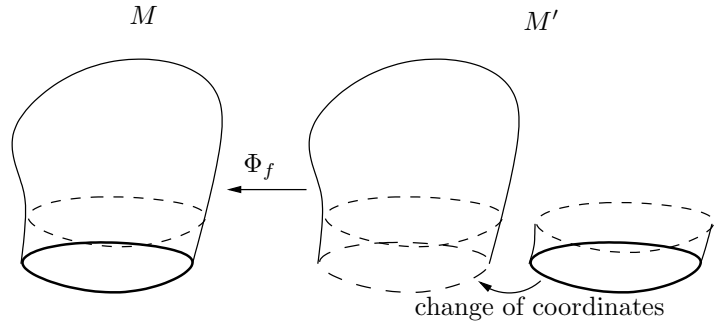}
\caption{Construction of a stretching map}
\end{center}\end{figure}

\begin{lemma}
If $r=\infty$, the resulting manifold $M'$ only
depends (up to $\diffb{\infty}$ diffeomorphism) upon the equivalence class of the stretching map.
\end{lemma}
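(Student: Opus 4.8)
The plan is to write down an explicit $\diffb{\infty}$ diffeomorphism between the two manifolds $M'$ produced by the construction from two equivalent stretching maps $f$ and $g$, using the same collar $C$ and the same inclusion $c$. Denote by $M'_f$ the manifold obtained from $f$: it is covered by two charts, a copy $V_1$ of $\Int(M)$ and a copy $V_2$ of $C$, identified with $\partial M\times[0,1[$, glued along $\partial M\times ]0,1[$ by $(x,y)\mapsto c(x,f(y))$; write $M'_g$, with charts $W_1,W_2$, for the analogous manifold built from $g$. Since $f$ and $g$ are equivalent, the map $h:=g^{-1}f$ is a $\diffb{\infty}$ diffeomorphism of $[0,1]$; being increasing and fixing $0$ and $1$, it restricts to a $\diffb{\infty}$ diffeomorphism of the half-open interval $[0,1[$ underlying the collar.

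I would then define $\Psi\colon M'_f\to M'_g$ chart by chart: on the first chart $\Psi$ is the identity $V_1=\Int(M)\to\Int(M)=W_1$, and on the collar chart $\Psi(x,y)=(x,h(y))$ from $V_2=\partial M\times[0,1[$ to $W_2=\partial M\times[0,1[$. The only thing to verify is that these two prescriptions agree on the overlap $\partial M\times ]0,1[$, and this is precisely where the choice of $h$ is used: a point of the overlap, seen in $V_2$ as $(x,y)$ with $y\in ]0,1[$, is the point $c(x,f(y))$ of $V_1$, while its image $(x,h(y))\in W_2$ equals, by the gluing law of $M'_g$, the point $c(x,g(h(y)))=c(x,f(y))$ of $W_1$, which is exactly the image of $c(x,f(y))$ under the identity. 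Hence $\Psi$ is a well-defined map; it is $\diffb{\infty}$ since it is $\diffb{\infty}$ in each chart, and the map built in the same way from $h^{-1}=f^{-1}g$ is a two-sided inverse, so $\Psi$ is a $\diffb{\infty}$ diffeomorphism and $M'_f$ and $M'_g$ are diffeomorphic.

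I do not expect a genuine obstacle here: the map $\Psi$ is essentially forced, and the only mildly delicate points are (i) checking that a $\diffb{\infty}$ diffeomorphism of the closed interval restricts to one of the half-open interval appearing in the collar, which is immediate from monotonicity together with the fact that $h$ fixes the endpoints, and (ii) keeping track of which copy of $\Int(M)$ or of $C$ one is working in, which is routine. One could additionally remark that $M'_f$ does not depend on the choice of collar either, by the uniqueness up to ambient isotopy of collar neighborhoods in the $\diffb{\infty}$ category, but that is not part of the present statement.
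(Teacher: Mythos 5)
Your proof is correct, and it uses the same essential map as the paper, namely $(x,y)\mapsto(x,g^{-1}f(y))$ on the collar chart; the difference lies in how the map is promoted to a global diffeomorphism. The paper defines it only on a collar neighborhood of the boundary and then invokes a partition of unity to extend it across the rest of $M'$, whereas you observe that, read through the gluing laws, this collar map is literally the identity on the overlap with the interior chart (since $c(x,f(y))\mapsto c(x,g(g^{-1}f(y)))=c(x,f(y))$), so it patches with the identity of $\Int(M)$ with no extension argument at all. Your route is the cleaner one: it shows the partition-of-unity step is unnecessary and produces an explicit two-chart formula for the diffeomorphism together with its explicit inverse built from $h^{-1}=f^{-1}g$. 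The one point you defer --- independence of the choice of collar, which is implicitly part of the claim that $M'$ depends \emph{only} on the equivalence class of $f$ --- is dispatched by the paper in a single sentence (``the choice of neighborhood does not matter''), and your appeal to uniqueness of collars up to isotopy is the standard justification for that sentence, so nothing substantive is missing.
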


\begin{proof}
First, the choice of neighborhood does not matter.

Now let $g$ be a stretching map equivalent to $f$ and denote by $M''$ the manifold
obtained from the two charts $\Int(M)$ and $C$ by the change of coordinates
\begin{eqnarray*}
 \partial M\times ]0,1[ &\to& \Int(M) \\
  (x,y) &\mapsto& c(x,g(y))
\end{eqnarray*}

Then the map $(x,y)\mapsto(x,g^{-1}f(y))$ is a diffeomorphism from a collar
neighborhood of $\partial M'$ to a collar neighborhood of $\partial M''$. Moreover, this map extends
to the image of the extensions $\tilde{c}(.,1)$ of $c$ that maps $\partial M\times{1}$ into the interiors $\Int(M'),\Int(M'')\simeq \Int(M)$.
Using a partition of unity,
one can extend further this map to a $\diffb{\infty}$ diffeomorphism between $M'$ and 
$M''$.
\end{proof}

\begin{lemma}
In the two cases $r=\infty,\omega$, the previous construction gives a manifold with boundary $M'$ that is
$\diffb{r}$ diffeomorphic to $M$, regardless the choice of the stretching function $f$.
\end{lemma}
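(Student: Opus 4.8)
The plan is to settle $r=\infty$ by an explicit construction and then reduce $r=\omega$ to it.

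For $r=\infty$, the idea is to correct the obvious but wrong candidate for a diffeomorphism $M\to M'$. That candidate is the identity on the first chart $\Int(M)$, completed near $\partial M$ by sending the collar coordinates $(x,y)$ of $M$ to the second-chart coordinates $(x,y)$ of $M'$; it is a $\diffb{\infty}$ bijection, but in the boundary charts its inverse reads $(x,y)\mapsto(x,f^{-1}(y))$, which fails to be differentiable as soon as $f'(0)=0$. I would remove this defect by absorbing it into a diffeomorphism of the interior. First choose an orientation-preserving $\diffb{\infty}$ diffeomorphism $\rho$ of $]0,1[$ with $\rho=f$ near $0$ and $\rho=\mathrm{id}$ near $1$; such a $\rho$ exists by a routine cut-off argument, using only that the restriction of $f$ to $]0,1[$ is a $\diffb{\infty}$ diffeomorphism (the second item of Definition~\ref{stretch_func}, away from $0$). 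Writing the collar as $C\simeq\partial M\times[0,1[$, let $G\colon\Int(M)\to\Int(M)$ be $(x,y)\mapsto(x,\rho(y))$ on the open collar and the identity on $\Int(M)\setminus C$; since $\rho=\mathrm{id}$ near $1$ the two recipes overlap on an open set where both equal the identity, so $G$ is a well-defined $\diffb{\infty}$ diffeomorphism of $\Int(M)$.

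I would then define $\Lambda\colon M\to M'$ to be $G$ on the chart $\Int(M)$ of $M'$ and, near $\partial M$, to send the collar coordinates $(x,y)$ of $M$ (for $y$ small) to the point of $M'$ with second-chart coordinates $(x,y)$. The two prescriptions agree on the overlap, because there $G\bigl(c(x,y)\bigr)=c\bigl(x,f(y)\bigr)$, which is exactly the point with second-chart coordinates $(x,y)$. In the charts used near the boundary $\Lambda$ now reads as the identity: the non-smoothness of $f^{-1}$ has been transferred to $G$ on the interior, where it is harmless. One checks that $\Lambda$ is a bijection and that $\Lambda,\Lambda^{-1}$ are $\diffb{\infty}$, so $\Lambda$ is a $\diffb{\infty}$ diffeomorphism for every stretching function $f$.

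For $r=\omega$: the construction of $M'$ is in particular a $\diffb{\infty}$ construction, since the change of coordinates is analytic hence $\diffb{\infty}$, so by the previous case $M$ and $M'$ are $\diffb{\infty}$ diffeomorphic. They are moreover both real-analytic manifolds with boundary, because $f$ and the restriction of $f$ to $]0,1[$ are analytic. The uniqueness, up to $\diffb{\omega}$ diffeomorphism, of a real-analytic structure compatible with a given $\diffb{\infty}$ structure on a manifold with boundary (Morrey and Grauert) then yields a $\diffb{\omega}$ diffeomorphism $M\to M'$. The main obstacle is precisely that the construction of the smooth case has no analytic analogue: there is no analytic $\rho$ equal to $f$ near $0$ and to the identity near $1$ unless $f$ is already trivial, so the analytic case genuinely relies on this uniqueness theorem rather than on an explicit map.
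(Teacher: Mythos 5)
Your proof is correct and follows essentially the same route as the paper: for $r=\infty$ the paper uses the preceding lemma to replace $f$ by an equivalent stretching function equal to the identity near $1$, after which the two atlases literally coincide on shrunken charts, and your explicit map $\Lambda$ (cut $f$ off against the identity and absorb the discrepancy into a diffeomorphism of the interior, leaving the identity in collar coordinates near the boundary) is precisely the diffeomorphism that reduction produces. The analytic case is handled identically in both arguments, by reduction to the smooth case together with the uniqueness of the real-analytic structure on a smooth manifold with boundary, which is exactly the paper's Theorem~\ref{analytic_theorem}.
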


\begin{proof}
First note that the result is obvious for the manifold 
with boundary $U\times[0,+\infty[$, where $U$ is any open set of $\mR^{n-1}$.

If $r=\infty$, the previous lemma enables
one to assume that $f$ is the identity on a neighborhood $]1-2\varepsilon,1]$ of $1$. 
Take then a covering of $\partial M$ by charts $(U_i)$:
the manifolds $M$ and $M'$ are covered by the same charts
$(U_i\times[0,1-\varepsilon[)$ and $\Int(M)\setminus \partial M\times]0,1-2\varepsilon[$,
with the same changes of coordinates. They are therefore $\diffb{\infty}$ diffeomorphic.

If $r=\omega$, the previous discussion shows that $M$ and $M'$
are $\diffb{\infty}$ diffeomorphic. But a given differentiable manifold admits 
only one real-analytic structure (this is discussed in more details
at the end of the section, see \ref{analytic_structure}), thus
$M$ and $M'$ are $\diffb{\omega}$ diffeomorphic.
\end{proof}

We have now two presentations of $M$ by the same pair of charts $C$, $\Int(M)$ but with
different changes of coordinates. The identity on $\Int(M)$ extends continuously
into a $\diffb{r}$ map $\Phi_f:M'\to M$. Up to composition by a diffeomorphism
$M\to M'$, $\Phi_f$ has the desired properties.
\end{proof}

\begin{definition}\label{stretch}
A map $\Phi_f$ satisfying the conclusion of Proposition \ref{construction} is said
to be a stretching map associated to the stretching function $f$.
A stretch of $\rho_0$ is an action $\rho_f=\Phi_f^*(\rho_0)$
where $\Phi_f$ is a stretching map.
\end{definition}

Let us prove that the conditions we put on $f$ ensure that a stretch $\rho_f$ is as regular
as $\rho_0$ ; recall that $\rho_0$ is an action of a \emph{connected}
Lie group $G$.

\begin{theorem}\label{regularity}
Any stretch of $\rho_0$ is a $\diffb{r}$ action on $M$.
\end{theorem}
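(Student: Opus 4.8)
The plan is to work locally near the boundary, where $\Phi_f$ has the normal form $(x,y)\mapsto(x,f(y))$, and to show that each $\rho_f(g)$ is $\diffb{r}$ there; away from the boundary $\rho_f$ is automatically $\diffb{r}$ since $\Phi_f$ restricts to a $\diffb{r}$ diffeomorphism of $\Int(M)$. Because $G$ is connected, it suffices to handle elements $g$ close to the identity (then use that $\rho_f$ is an action and that products of $\diffb{r}$ maps are $\diffb{r}$); so I would fix $p\in\partial M$ and study $\rho_f(g)$ for $g$ near $e$, using the coordinates $(x_1,\dots,x_{n-1},y)$ around $p$ and $(x'_1,\dots,x'_{n-1},y')$ around $\Phi_f(p)$ provided by Proposition \ref{construction}.

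In those coordinates $\rho_f(g)=\Phi_f^{-1}\circ\rho_0(g)\circ\Phi_f$. Write $\rho_0(g)$ in the target coordinates as $(x',y')\mapsto\bigl(A(x',y'),B(x',y')\bigr)$ with $A$ valued in $\mR^{n-1}$ and $B\colon[0,1]\to[0,1]$ the boundary-defining coordinate; since $\rho_0$ is a $\diffb{r}$ action preserving $\partial M$, the functions $A,B$ are $\diffb{r}$ and $B(x',0)\equiv 0$, so $B(x',y')=y'\,b(x',y')$ for a $\diffb{r}$ function $b$. Composing, the first $n-1$ components of $\rho_f(g)$ are $A\bigl(x,f(y)\bigr)$, manifestly $\diffb{r}$. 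The last component is $f^{-1}\bigl(f(y)\,b(x,f(y))\bigr)$, and this is the one term whose regularity at $y=0$ is not obvious — this is the main obstacle. The key point, and the place where condition (3) of Definition \ref{stretch_func} enters, is to show that $f^{-1}\bigl(f(y)\,u\bigr)$ depends $\diffb{r}$-ly on $(y,u)$ for $u$ near $1$: differentiating with respect to $y$ gives $\dfrac{f'(y)\,u}{f'\!\bigl(f^{-1}(f(y)u)\bigr)}$, which I would rewrite using the $\diffb{r}$ function $h:=f/f'$ (well-defined and $\diffb{r}$ on all of $[0,1]$ by hypothesis) as
\[
\frac{\partial}{\partial y}\,f^{-1}\!\bigl(f(y)u\bigr)
=\frac{h\bigl(f^{-1}(f(y)u)\bigr)}{h(y)}\cdot\frac{f(y)u}{f^{-1}(f(y)u)}\cdot\frac{1}{\,1\,}\,;
\]
the factor $f(y)u/f^{-1}(f(y)u)$ and the ratio $h\bigl(f^{-1}(f(y)u)\bigr)/h(y)$ both extend $\diffb{r}$-ly to $y=0$ (near $y=0$, $h(0)=0$ forces care, but $h(f^{-1}(f(y)u))/h(y)$ has a removable singularity because $h$ vanishes to the same order at $0$ in numerator and denominator, $f^{-1}(f(y)u)\to 0$ as $y\to 0$). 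Thus the $y$-derivative of the last component is $\diffb{r}$; iterating (all the intermediate functions that appear are built from $f$, $f^{-1}$, $h$ and $\diffb{r}$ functions, and each differentiation only produces more factors of the same type) shows the last component is $\diffb{r}$ in $(x,y)$.

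Finally I would assemble the pieces: $\rho_f(g)$ is $\diffb{r}$ in $(x,y)$ near every $p\in\partial M$ for $g$ near $e$, and $\diffb{r}$ on $\Int(M)$ for all $g$; joint $\diffb{r}$-dependence on $(g,x,y)$ follows the same way since $A$, $B$ depend $\diffb{r}$-ly on $g$ and the formulas above are $\diffb{r}$ in all variables jointly. By connectedness of $G$, writing an arbitrary $g$ as a product of elements near $e$ and using that $\rho_f$ is a homomorphism, $\rho_f(g)$ is $\diffb{r}$ for every $g$, and the map $G\times M\to M$ is $\diffb{r}$. Hence $\rho_f$ is a $\diffb{r}$ action on $M$.
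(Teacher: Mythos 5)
Your strategy is genuinely different from the paper's: you work at the group level, reducing by connectedness to $g$ near the identity and computing $\rho_f(g)=\Phi_f^{-1}\circ\rho_0(g)\circ\Phi_f$ in the normal-form coordinates, whereas the paper linearizes the problem. It pulls back the vector fields $\rho_0(X)$, $X\in\al{g}$, for which the whole computation collapses to the single factor $\beta_1(x,f(y))\,f(y)/f'(y)$ (after Hadamard's lemma, resp.\ a Taylor-series computation with $f(y)=y^p$ in the analytic case), so that condition (3) of Definition \ref{stretch_func} applies verbatim; it then reintegrates the $\al{g}$-action to a $G$-action via the universal cover and the triviality of the deck action on the dense orbit. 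Your reduction of everything to the joint $\diffb{r}$ regularity of $(y,u)\mapsto f^{-1}(f(y)u)$ near $u=1$, $y=0$ correctly identifies the crux, and that statement is true --- but your proof of it has a genuine gap.

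First, the displayed derivative is miscomputed: the chain rule gives $\partial_y f^{-1}(f(y)u)=f'(y)u/f'\bigl(f^{-1}(f(y)u)\bigr)$, and substituting $f'=f/h$ with $h=f/f'$ yields exactly $h\bigl(f^{-1}(f(y)u)\bigr)/h(y)$, with no extra factor $f(y)u/f^{-1}(f(y)u)$. Second, and more seriously, the claim that this quotient has a removable singularity at $y=0$ ``because $h$ vanishes to the same order in numerator and denominator'' is not an argument in the $\diffb{\infty}$ category: $h$ may well vanish to infinite order at $0$ (e.g.\ $f(y)=e^{-e^{1/y}}$ gives $h(y)=y^2e^{-1/y}$, a perfectly admissible stretching function), and even for finite orders, equal order of vanishing does not yield smoothness of a quotient whose numerator is a composition through $f^{-1}$ --- a map that is itself not $\diffb{r}$ at $0$. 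To control that numerator you essentially need to already know $f^{-1}(f(y)u)=y\cdot(\text{smooth positive factor})$, which is the first-order case of what you are proving; and the ``iterate'' step for higher derivatives is not routine, since each differentiation reintroduces $f'$ and $f^{-1}$ evaluated at moving points. A clean repair of your route does exist: $F(y,u):=f^{-1}(f(y)u)$ satisfies
\[
u\,\partial_u F = h(F),\qquad F(y,1)=y,
\]
so $F(y,u)$ is the time-$\log u$ flow of the $\diffb{r}$ vector field $h\,\partial/\partial y$ started at $y$, and $\diffb{r}$ dependence of flows on initial conditions gives the joint regularity you need --- using $f/f'$ in the same way the paper's vector-field computation does. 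As written, however, the key analytic step is asserted rather than proved.
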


\begin{proof}
We first go to the Lie algebra level.

Let $\al{g}$ be the Lie algebra of $G$. For any $X\in\al{g}$, we denote by
$\rho_0(X)$ the corresponding vector field on $M$. Then
$\phi_f^*(\rho_0(X))$ is well defined and $\diffb{r}$ on $\Int(M)$ since
by construction $\Phi_f$ is a diffeomorphism when restricted to $\Int(M)$.
Let us prove that it extends to a $\diffb{r}$ vector field on $M$.
This will be done by considering local charts near the boundary.

We start with the analytic case. Since any analytic stretching map
is equivalent to the lower order term of its Taylor series, we can assume that $f$ is
written in the form
$y\mapsto y^p$ for some integer $p$. In suitable charts $(x_i,y)$ 
near the boundary (locally defined by $y=0$), we can write 
$\Phi_f$ in the form $(x_i,y)\mapsto(x_i,y^p)$ and
$\rho_0(X)$ in the form
\[ \rho_0(X)=\sum_{i=1}^{n-1}\sum_{a,b}
     \alpha_{a,b}^i x^a y^b\frac{\partial}{\partial x_i}
  +\sum_{a,b}\beta_{a,b} x^a y^b \frac{\partial}{\partial y}\]
where the sums are taken over all non-negative integers $b$ 
and all $(n-1)$-tuples of non-negative integers $a$; $x^a$ means
$x_1^{a_1} x_2^{a_2}\dots x_{n-1}^{a_{n-1}}$.
By a direct computation, we see that 
\[ \Phi_f^*(\rho_0(X))=\sum_{i=1}^{n-1}\sum_{a,b}
     \alpha_{a,b}^i x^a y^{pb}\frac{\partial}{\partial x_i}
  +\sum_{a,b}\beta_{a,b} x^a y^{pb+1-p} \frac{\partial}{\partial y}\]
hence this map is analytic since, $\rho_0(X)$ being tangent to the boundary, 
$\beta_{a,0}=0$  for all $a$.

In the smooth case, we write
\[ \rho_0(X)=\sum_{i=1}^{n-1}\sum_{a,b}
     \alpha^i(x,y)\frac{\partial}{\partial x_i}
  +\beta(x,y) \frac{\partial}{\partial y}\]
where $\alpha_i$ and $\beta$ are $\diffb{\infty}$ functions, and $\beta(x,0)=0$ for all
$x$. According to the Hadamard lemma (see for example \cite{Arnold}), there is
a $\diffb{\infty}$ function $\beta_1$ such that $\beta(x,y)=\beta_1(x,y)y$.
Then we get
\[ \Phi_f^*(\rho_0(X))=\sum_{i=1}^{n-1}
     \alpha^i(x,f(y))\frac{\partial}{\partial x_i}
  +\beta_1(x,f(y))\frac{f(y)}{f'(y)} \frac{\partial}{\partial y}\]
and, since $f/f'$ extends differentiably, so does $\Phi_f^*(\rho_0(X))$.

We get that $\Phi_f^*(\rho_0)$ defines a $\diffb{r}$ action of
$\al{g}$ on $M$. Thus, it defines a $\diffb{r}$ action
of the universal covering $\wt{G}$ of $G$. But an element that
projects to $1\in G$ must act trivially on $\Int(M)$, and by continuity it
acts trivially on $M$. We thus get a $\diffb{r}$ action of $G$
on $M$ that coincides with $\Phi_f^*(\rho_0)$ on $\Int(M)$.
This last action is therefore $\diffb{r}$.
\end{proof}

The main reason for restricting ourselves to smooth and analytic actions is
the loss of regularity in the Hadamard lemma for $\diffb{r}$ functions,
$r<\infty$.

\subsection{Uniqueness of the analytic structure of a smooth manifold with 
boundary}\label{analytic_structure}

We used above the following result.
\begin{theorem}\label{analytic_theorem}
Let $M'$ and $M$ be two compact real-analytic manifolds with (analytic) boundary.
If there is a smooth diffeomorphism $F:M'\to M$, then there is also an
analytic one.
\end{theorem}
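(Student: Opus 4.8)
The plan is to reduce the statement about manifolds with boundary to the classical theorem that a smooth manifold (without boundary) carries a unique real-analytic structure, by the standard trick of passing to the double. First I would recall that Whitney's theorem, together with the work of Morrey and Grauert, guarantees that a smooth \emph{closed} manifold $N$ admits a real-analytic structure compatible with its smooth structure, and that this structure is unique up to analytic diffeomorphism: any two analytic structures inducing the same smooth structure are related by an analytic diffeomorphism isotopic to the identity. The task is thus to manufacture, from the analytic manifolds with boundary $M$ and $M'$ and the smooth diffeomorphism $F$, a setting where this closed-manifold statement can be applied.

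The key construction is the analytic double. Given a compact real-analytic manifold with analytic boundary $M$, one forms $\widehat{M} = M \cup_{\partial M} M$, the union of two copies of $M$ glued along the boundary. The point to check is that $\widehat{M}$ carries a natural real-analytic structure: near an interior point this is clear, and near a boundary point one uses an analytic collar $\partial M \times [0,\varepsilon)$ of $\partial M$ in $M$ (which exists analytically: flow for a short time along an analytic inward-pointing vector field, or invoke the analytic tubular neighborhood theorem) to produce charts $\partial M \times (-\varepsilon,\varepsilon)$ on $\widehat{M}$ whose transition maps with the interior charts are analytic. One does the same for $M'$, obtaining an analytic closed manifold $\widehat{M'}$. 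The smooth diffeomorphism $F$ need not respect collars, but one can first isotope $F$ through smooth diffeomorphisms so that, in a smaller collar of $\partial M'$, it has the product form $(x,t)\mapsto (F|_{\partial M'}(x), t)$ with respect to fixed smooth collars; this is the usual collar-straightening argument (Hirsch, \emph{Differential Topology}). Once $F$ is in product form near the boundary, it doubles to a smooth diffeomorphism $\widehat{F}:\widehat{M'}\to\widehat{M}$.

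Now apply the closed-manifold theory to $\widehat{M'}$. The manifold $\widehat{M'}$ has its own analytic structure coming from the analytic double, and it also has the analytic structure pulled back from $\widehat{M}$ via $\widehat{F}$; both induce the same smooth structure on $\widehat{M'}$, so by uniqueness of the analytic structure there is an analytic diffeomorphism $G:\widehat{M'}\to\widehat{M}$ that is smoothly isotopic to $\widehat{F}$, in particular homotopic to it. The remaining point is to arrange that $G$ carries the copy of $M'$ sitting inside $\widehat{M'}$ onto the copy of $M$ inside $\widehat{M}$, so that it restricts to an analytic diffeomorphism $M'\to M$. One way is to note that $\widehat{M'}$ carries an analytic involution $\sigma'$ (the deck transformation exchanging the two copies, fixing $\partial M'$) and likewise $\widehat{M}$ has $\sigma$; if $G$ intertwines $\sigma'$ and $\sigma$ it descends to the quotients-with-boundary. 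One can enforce this by averaging: replace $G$ by a diffeomorphism equivariant for these involutions, which is possible because $G$ is already smoothly isotopic to the equivariant map $\widehat{F}$ and equivariant analytic approximation (Grauert) applied $\sigma$-equivariantly yields an equivariant analytic $G$. Restricting such a $G$ to the submanifold $M'\subset\widehat{M'}$ gives the desired analytic diffeomorphism $M'\to M$.

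The main obstacle I expect is not the abstract invocation of Whitney–Grauert–Morrey but the bookkeeping needed to make the double genuinely analytic and to make $\widehat{F}$ genuinely smooth and (ultimately) the approximation equivariant: one must be careful that the analytic collar exists and is compatible with the analytic structure of $M$, that the smooth isotopy straightening $F$ near the boundary does not destroy anything, and that Grauert's approximation can be run $\mathbb{Z}/2$-equivariantly for the involution exchanging the two halves. Each of these is standard, but together they constitute the real content; the reduction itself is formal once they are in place.
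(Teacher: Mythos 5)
Your reduction to the double is the same first move as the paper's, and the collar--straightening of $F$ so that it doubles to a smooth diffeomorphism $\widehat F$ is fine. The gap is in the last step, which is where all the content lies: you need an \emph{analytic} diffeomorphism $\widehat{M'}\to\widehat{M}$ that carries the analytic hypersurface $\partial M'$ onto $\partial M$ (equivalently, one equivariant for the doubling involutions), and neither of the two mechanisms you offer produces it. ``Averaging'' has no meaning for diffeomorphisms, and an equivariant version of the Grauert--Whitney approximation theorem is not something you can invoke as standard: the ordinary statement gives an analytic diffeomorphism $C^1$-close to $\widehat F$, but a map close to an equivariant map need not be equivariant, and need not send $\partial M'$ into $\partial M$. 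Since the whole point of the theorem is the behaviour at the boundary, this is precisely the step that cannot be waved away; the paper itself notes that the boundary version is hard to locate in the literature exactly because of this relative constraint.

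The paper fills the hole with a \emph{relative} approximation theorem of Tognoli: if $\ell$ is a smooth function on a relatively compact open set $U\subset\mR^N$ whose restriction to a coherent analytic subset $V$ is analytic, then $\ell$ can be $\diffb{k}$-approximated by an analytic function that \emph{coincides with $\ell$ on $V$}. Concretely: embed the doubles analytically in $\mR^N$ (Morrey--Grauert), apply Grauert's theorem to the closed manifolds $\partial M'$ and $\partial M$ alone to get an analytic diffeomorphism $L:\partial M'\to\partial M$, extend $L$ to a smooth diffeomorphism of the doubles and then to a neighborhood $U$ in $\mR^N$, and apply Tognoli with $V=\partial M'$ and $k=1$; composing with an analytic retraction of a neighborhood of $\widehat{M}$ onto $\widehat{M}$ yields an analytic map that is $C^1$-close to a diffeomorphism (hence a diffeomorphism) and restricts to $L$ on the boundary, so it respects the two halves and descends to the manifolds with boundary. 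If you want to salvage your equivariant route, you would have to prove an equivariant approximation statement of comparable strength, which is essentially the same difficulty in different clothing.
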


The without boundary version of this theorem is well known: Grauert proved
in \cite{Grauert} that the set of analytic diffeomorphisms between two
analytic manifold without boundary is dense into the set of smooth ones.

However, even if the ``with boundary'' version is unsurprising and cannot be
expected to be new, it is very difficult to find in the literature.
Luckily, it can be deduced from the following result and the Morrey-Grauert theorem
that states that any analytic manifold can be analytically embedded in $\mR^N$ for some
$N$.
\begin{theorem}[Tognoli \cite{Tognoli}]
Let $U$ be a relatively compact open set in $\mR^N$, $V$ a coherent analytic subset of $U$
and $\ell$ a smooth function on $U$ whose restriction to $V$ is analytic. Then for all
$k$ and all $\varepsilon>0$ there is an analytic function $h$ defined on $U$ such that
\begin{enumerate}
\item $\Vert \ell-h\Vert_k<\varepsilon$ where $\Vert\cdot\Vert_k$ is the $\diffb{k}$ norm,
\item $h$ and $\ell$ coincide on $V$
\end{enumerate}
\end{theorem}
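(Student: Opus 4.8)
The plan is to produce $h$ in two stages: first an \emph{unconstrained} real-analytic approximation of $\ell$ in the $\diffb{k}$ norm, and then an analytic correction, made possible by the coherence of $V$, that restores the exact equality on $V$ without spoiling the approximation. After harmlessly shrinking to a slightly smaller relatively compact open set (so that all the functions involved extend continuously to a compact neighborhood and the $\Vert\cdot\Vert_k$ norms are finite), the whole argument takes place in the Fr\'echet space $\mathcal{O}(U)$ of real-analytic functions on $U$.

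For the first stage I would extend $\ell$ to a compactly supported smooth function on $\mR^N$ (possible since $U$ is relatively compact) and convolve it with the Gaussian heat kernel of small variance $t$. The convolution is real-analytic, and a standard estimate shows it converges to $\ell$ in $\diffb{k}$ on the relevant compact set as $t\to 0$; alternatively one complexifies and applies Oka--Weil approximation on a Stein neighborhood. This yields an analytic $h_0$ with $\Vert \ell-h_0\Vert_{k'}$ as small as desired, for any prescribed order $k'\geq k$. It meets condition (1) but in general $h_0|_V\neq \ell|_V$, so condition (2) fails.

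The second stage is where the coherence of $V$ enters. Since $V$ is coherent, its ideal sheaf $\mathcal{I}$ of germs of analytic functions vanishing on $V$ is a coherent $\mathcal{O}_U$-module. Passing to a Stein complex neighborhood of $V$ (such neighborhoods exist by Grauert's theorem, which is exactly what coherence buys) and invoking Cartan's Theorems~A and~B, the sequence $0\to\mathcal{I}(U)\to\mathcal{O}(U)\to\mathcal{O}(V)\to 0$ is exact, where $\mathcal{O}(V)=(\mathcal{O}_U/\mathcal{I})(U)$ is the algebra of analytic functions on $V$; in particular the restriction map $\mathcal{O}(U)\to\mathcal{O}(V)$ is a continuous surjection of Fr\'echet spaces. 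Both $\ell|_V$ and $h_0|_V$ are analytic on $V$, so $v:=(\ell-h_0)|_V$ is an analytic function on $V$ that is $\diffb{k'}$-small, because it is the restriction of the $\diffb{k'}$-small function $\ell-h_0$. By the open mapping theorem the restriction map is open, hence admits the needed estimate: one can find $w\in\mathcal{O}(U)$ with $w|_V=v$ and $\Vert w\Vert_k\leq C\,\Vert v\Vert_{k'}$ for a fixed constant $C$ and some $k'\geq k$. Setting $h:=h_0+w$, one has $h|_V=h_0|_V+v=\ell|_V$, giving (2), while $\Vert\ell-h\Vert_k\leq\Vert\ell-h_0\Vert_k+\Vert w\Vert_k$ is small, giving (1); choosing the first-stage approximation good enough in $\diffb{k'}$ makes both terms below $\varepsilon$.

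The main obstacle is the \emph{quantitative} surjectivity of the restriction $\mathcal{O}(U)\to\mathcal{O}(V)$, that is, the existence of a correction $w$ whose $\diffb{k}$ norm is controlled by the (small) norm of the target datum on $V$. The \emph{qualitative} surjectivity is Cartan's Theorem~B transported to the real-analytic setting through a Stein complexification, and this step uses coherence of $V$ in an essential way --- it is precisely here that the hypothesis cannot be dropped. Turning this into an estimate is handled by the open mapping theorem for continuous surjections of Fr\'echet spaces, at the mild cost of losing finitely many derivatives (the passage from $k'$ to $k$), which is absorbed by taking the unconstrained approximation to higher order. The remaining work is bookkeeping: verifying that the heat-kernel approximation is genuinely analytic and $\diffb{k'}$-convergent, and that restriction to $V$ is norm-continuous, both of which are routine once the open mapping estimate is in place.
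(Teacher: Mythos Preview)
The paper does not give its own proof of this statement: it is quoted verbatim as a theorem of Tognoli, with a citation, and then used as a black box in the proof of Theorem~\ref{analytic_theorem}. There is therefore nothing in the paper to compare your argument against.

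That said, your two-stage outline --- unconstrained analytic approximation followed by an analytic correction obtained from Cartan's Theorem~B on a Stein complexification, with the coherence of $V$ ensuring surjectivity of the restriction map --- is indeed the standard route to results of this type and is close in spirit to Tognoli's own argument. One technical point deserves care: you refer to ``the Fr\'echet space $\mathcal{O}(U)$ of real-analytic functions on $U$'', but the space of real-analytic functions on a real open set is \emph{not} Fr\'echet in any natural topology (real-analytic functions are not closed under $C^\infty$ convergence, and the usual topology is an inductive limit). The open mapping argument must instead be run on the Fr\'echet space of holomorphic functions on a \emph{fixed} Stein neighborhood $\Omega$ of $\overline{U}$ in $\mathbb{C}^N$, after checking that the complexification $V^{\mathbb{C}}\subset\Omega$ is still coherent and that $\ell|_V$ extends holomorphically to $V^{\mathbb{C}}$ (possibly after shrinking $\Omega$). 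With that correction the quantitative surjectivity step goes through as you describe; without it, the open mapping theorem is being invoked on a space where it does not apply.
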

Note that an analytic submanifold is a special case of a coherent analytic subset.

\begin{proof}[Proof of Theorem \ref{analytic_theorem}]
Up to doubling, one has to prove that if $M$ and $M'$ are compact
analytic manifolds without boundary,
$N$ and $N'$ are analytic submanifolds of $M$ and $M'$, and $F: M\to M'$
is a smooth diffeomorphism that restricts to a smooth diffeomorphism
$N\to N'$, then there is an analytic diffeomorphism $M\to M'$ that maps
$N$ onto $N'$.

Thanks to the Morrey-Grauert Theorem we can embed $M$ and $M'$ into $R^N$,
and using the Grauert Theorem we construct an analytic diffeomorphism $L : N \to N'$.
It extends into a smooth diffeomorphism $L : M \to M'$. Moreover, we can
smoothly extend $L$ to a relatively compact open set $U \subset R^N$ containing $M$.
  Since $M'$ is analytic, it admits a neighborhood $U'$ that retracts
analytically onto $M'$ (see \cite{Krantz-Parks} Theorem 2.7.10). 
Using the approximation theorem of Tognoli on the
coordinates of $L$, with $k=1$, we construct a map $H : M \to U'$ that is an
analytic diffeomorphism onto its image, coincides with $L$ on $N$, and is
$\diffb{1}$ close to $L$.
  Composing $H$ with the retraction $U' \to M'$, we get the desired analytic
diffeomorphism $(M,N) \to (M',N')$ (the $\diffb{1}$ closeness to $L$ ensures that 
this actually is a diffeomorphism).
\end{proof}

\section{Non-conjugacy of stretches}

Now, we would like to distinguish between the various stretches of $\rho_0$, in
order to ensure that $\moduli^r(\rho_0)$ is large. Unfortunately,
we are able to do so only under an algebraic assumption. 

Note that for many actions, explicit computations as in the proof of Theorem \ref{regularity}
will be sufficient to prove that $\Phi_f^*(\rho_0)$ is not conjugate to $\rho_0$. For example,
the valuation of the Taylor expansion of $\rho_0(X)$, where $X$ is any given element of $\al{g}$,
 is a conjugacy invariant. Problems are however expected when all elements of $\al{g}$ act
very flatly near the boundary.

\subsection{An algebraic condition}\label{condition_A}

Let $x$ be a point of the interior of $M$ and let $H\subset G$ be its stabilizer for the
action $\rho_0$. Denote by $\stab(H)$ the subgroup of elements $g\in G$ such that
$gHg^{-1}=H$, and by $Z(G)$ the center of $G$. The inclusion $\stab(H)\supset H\cdot Z(G)$
always holds; we consider the converse inclusion.
\begin{itemize}
\item[(A)] \quad $\stab(H) = H\cdot Z(G)$.
\end{itemize}
Note that (A) does not depend upon the choice of $x$. 

Let us stress two particular cases.
First, when  no two points of $\Int(M)$ have the same stabilizers, $\stab(H)=H$ and
(A) holds. For example, this is the case for symmetric spaces.
Second, when $G$ is abelian $\Stab(H)=G=Z(G)$.

Moreover (A) is stable by direct product in the following sense.
\begin{proposition}
Consider two actions $\rho_i$ of groups $G_i$ on manifolds $M_i$ ($i=1,2$),
one being a homogeneous manifold and the second an almost homogeneous manifold with boundary.
If (A) holds for both actions, then (A) holds for the action of $G_1\times G_2$
on $M_1\times M_2$ defined by
\[(g_1,g_2)\cdot(x_1,x_2)=(\rho_1(g_1)x_1,\rho_2(g_2)x_2).\]
\end{proposition}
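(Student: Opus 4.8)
The plan is to choose a convenient base point and then reduce everything to the fact that conjugation and the center behave componentwise in a direct product. Say $M_1$ is the homogeneous factor and $M_2$ the almost homogeneous one with boundary. First I would check that $G_1\times G_2$ acting on $M_1\times M_2$ is again an almost homogeneous manifold with boundary, so that condition (A) even makes sense for it: a group acting transitively on a manifold preserves the decomposition into interior and boundary, hence $M_1$ has empty boundary, $M_1\times M_2$ is a manifold with boundary $M_1\times\partial M_2$ and interior $M_1\times\Int(M_2)$, and the product action is transitive on this interior because each factor action is transitive on the relevant interior. Since (A) does not depend on the chosen interior point, I am then free to test it at a point of my choosing.

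Next, fix $x_1\in M_1$ with stabilizer $H_1\subset G_1$ and $x_2\in\Int(M_2)$ with stabilizer $H_2\subset G_2$; then $x=(x_1,x_2)$ is an interior point of $M_1\times M_2$ and its stabilizer for the product action is $H=H_1\times H_2$. The single computation to record is that for $(g_1,g_2)\in G_1\times G_2$ one has
\[(g_1,g_2)\,H\,(g_1,g_2)^{-1}=\bigl(g_1H_1g_1^{-1}\bigr)\times\bigl(g_2H_2g_2^{-1}\bigr),\]
and, since a product of nonempty sets determines each of its factors, this equals $H_1\times H_2$ exactly when $g_1H_1g_1^{-1}=H_1$ and $g_2H_2g_2^{-1}=H_2$. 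Hence $\stab(H)=\stab(H_1)\times\stab(H_2)$, each $\stab$ being computed in the corresponding group.

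Finally I would feed in the hypotheses. By (A) for $\rho_1$ and for $\rho_2$, $\stab(H)=\bigl(H_1\cdot Z(G_1)\bigr)\times\bigl(H_2\cdot Z(G_2)\bigr)$. Using $Z(G_1\times G_2)=Z(G_1)\times Z(G_2)$ together with the elementary identity $(A\times B)(C\times D)=(AC)\times(BD)$ for nonempty subsets $A,C\subset G_1$ and $B,D\subset G_2$, one gets
\[H\cdot Z(G_1\times G_2)=(H_1\times H_2)\cdot\bigl(Z(G_1)\times Z(G_2)\bigr)=\bigl(H_1Z(G_1)\bigr)\times\bigl(H_2Z(G_2)\bigr),\]
which is precisely $\stab(H)$. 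This establishes (A) for the product action. I do not expect any genuine obstacle: the only two points deserving a word of justification are that the product is still an almost homogeneous manifold with boundary and the two componentwise identities above; the remainder is the bookkeeping displayed.
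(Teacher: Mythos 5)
Your argument is correct and follows the same route as the paper, which simply records the three identities $H=H_1\times H_2$, $Z(G_1\times G_2)=Z(G_1)\times Z(G_2)$ and $\stab(H)=\stab(H_1)\times\stab(H_2)$ and concludes; you have merely supplied the (straightforward) verifications the paper leaves implicit. Nothing to change.
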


\begin{proof}
With obvious notations we have $H=H_1\times H_2$, $Z(G)=Z(G_1)\times Z(G_2)$
and $\Stab(H)=\Stab(H_1)\times\Stab(H_2)$ and the result follows.
\end{proof} 

This applies for example to the product of the projective compactification
of a non-compact rank one symmetric space on the one hand, and a torus or a
compact symmetric space on the other hand.

Another example of an almost homogeneous manifold satisfying (A) is the Poin\-car\'e compactification
of Euclidean space. It is obtained as follows: one considers $\mR^n$ as a subspace
of $\mR^{n+1}$ and maps it onto the upper hemisphere of a sphere $S$ tangent
to it from below, by a projection centered in the center of $S$. The group of affine isometries
then acts analytically on the closure of the upper hemisphere with two orbits,
the image of $\mR^n$ and the equator. This example is again a compactification
of a symmetric space. Note that there is no hope
of obtaining other examples with non-positively curved symmetric spaces, since non-Euclidean higher-rank spaces
admit no such differentiable compactification \cite{Kloeckner3}.
One can also consider the action of translations on the Poincar\'e compactification of $\mR^n$
to obtain an almost homogeneous manifold with non-homogeneous boundary, satisfying condition (A).

\subsection{Common regularity of semi-conjugacies}

The core result of this section is the following simple lemma.
\begin{lemma}[common regularity]
Assume that (A) holds, and let $\rho_1$ and $\rho_2$ be two $\diffb{r}$ actions of $G$
on $M$, both topologically conjugate to $\rho_0$. Let $\Phi_0$ and $\Phi$ be two homeomorphisms
of $M$ that are $(\rho_1,\rho_2)$ equivariant. 
Then $\Phi$ is $\diffb{r}$ if and only if $\Phi_0$ is.
\end{lemma}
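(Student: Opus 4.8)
The plan is to reduce the statement to the transitive case on $\Int(M)$ by composing the two given semi-conjugacies. Consider the homeomorphism $\Psi = \Phi \circ \Phi_0^{-1} : M \to M$. Since $\Phi_0$ intertwines $\rho_1$ with $\rho_2$ and $\Phi$ intertwines $\rho_1$ with $\rho_2$, the map $\Psi$ is $(\rho_2,\rho_2)$ equivariant, i.e. it commutes with the action $\rho_2$. Thus it suffices to prove: if $\Psi$ commutes with a $\diffb{r}$ action $\rho_2$ topologically conjugate to $\rho_0$, then $\Psi$ is automatically $\diffb{r}$. Indeed, once this is known, $\Phi = \Psi \circ \Phi_0$ is $\diffb{r}$ whenever $\Phi_0$ is, and symmetrically (applying the same to $\Psi^{-1}$, which also commutes with $\rho_2$) $\Phi_0 = \Psi^{-1}\circ\Phi$ is $\diffb{r}$ whenever $\Phi$ is.

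Next I would pin down $\Psi$ on the open dense orbit $\Int(M)$. Fix $x \in \Int(M)$ with stabilizer $H$ for $\rho_2$ (note $\rho_2$ is transitive on $\Int(M)$ since it is topologically conjugate to $\rho_0$, and a topological conjugacy carries the open orbit to the open orbit). The point $\Psi(x)$ lies in $\Int(M)$ as well, because $\Psi$, being an equivariant homeomorphism, permutes the orbits and preserves the open one; so $\Psi(x) = \rho_2(g_0)x$ for some $g_0 \in G$. Now equivariance of $\Psi$ forces $g_0 H g_0^{-1} = H$, i.e. $g_0 \in \stab(H)$. By hypothesis (A) we may write $g_0 = h z$ with $h \in H$ and $z \in Z(G)$, hence $\Psi(x) = \rho_2(z)x$. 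Running the same argument at every point and using that a central element commutes with the whole action, one checks that $\Psi$ agrees with $\rho_2(z)$ on the orbit of $x$, namely on all of $\Int(M)$: for $y = \rho_2(g)x$ we get $\Psi(y) = \rho_2(g)\Psi(x) = \rho_2(g)\rho_2(z)x = \rho_2(z)\rho_2(g)x = \rho_2(z)y$.

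Finally, $\Psi$ and $\rho_2(z)$ are two continuous maps $M \to M$ agreeing on the dense subset $\Int(M)$, so they coincide everywhere on $M$. But $\rho_2(z)$ is a $\diffb{r}$ diffeomorphism of $M$ because $\rho_2$ is a $\diffb{r}$ action. Therefore $\Psi$ is $\diffb{r}$, which completes the reduction and hence the proof. The one point requiring care — and the main (mild) obstacle — is the bookkeeping at the start: verifying that $\Psi$ and $\Phi_0^{-1}$, $\Phi^{-1}$ genuinely have the claimed equivariance properties and that topological conjugacy with $\rho_0$ transfers the transitivity on $\Int(M)$ and the stabilizer $H$ to $\rho_2$, so that condition (A) — which is phrased in terms of $\rho_0$ but is insensitive to the choice of interior point and to conjugation — can legitimately be invoked for $\rho_2$.
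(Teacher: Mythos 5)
Your proposal is correct and follows essentially the same route as the paper: locate a group element carrying $\Phi_0(x)$ to $\Phi(x)$, use condition (A) to replace it by a central element $z$ modulo the stabilizer, propagate the identity $\Phi=\rho_2(z)\Phi_0$ over the dense interior orbit by equivariance and centrality, and conclude by continuity. The only (harmless) cosmetic difference is that you first form $\Psi=\Phi\circ\Phi_0^{-1}$ and show it equals $\rho_2(z)$, whereas the paper compares $\Phi$ and $\Phi_0$ directly.
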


\begin{proof}
Let $x$ be any point of $\Int(M)$ and $H$ its stabilizer for $\rho_1$. 

Since $\rho_0$ is transitive on $\Int(M)$,
so is $\rho_2$ and there is a $g_0\in G$ such that $\Phi(x)=\rho_2(g_0)\Phi_0(x)$.
But by equivariance, the stabilizer for $\rho_2$ of both $\Phi(x)$ and $\Phi_0(x)$
must be $H$. Therefore, $g_0\in\stab(H)$ and from (A) it follows that $g_0=h_0z_0$ where $h_0\in H$ and $z_0\in Z(G)$.

We get for all $g\in G$:
\begin{eqnarray*}
\Phi(\rho_1(g)x) &=& \rho_2(g)\Phi(x) \\
                 &=& \rho_2(g)\rho_2(z_0)\Phi_0(\rho_1(h_0)x) \\
                 &=& \rho_2(z_0)\rho_2(g)\Phi_0(x)\\
                 &=& \rho_2(z_0)\Phi_0(\rho_1(g)x).
\end{eqnarray*}
that is, $\Phi=\rho_2(z_0)\Phi_0$ on $\Int(M)$. By continuity, this equality
holds on the whole of $M$ and since $\rho_2(z_0)$ is a $\diffb{r}$ diffeomorphism,
$\Phi$ is $\diffb{r}$ if and only if $\Phi_0$ is.
\end{proof}

The first consequence of the common regularity lemma is that two stretching
 functions that are not equivalent lead to non-conjugate stretches.
\begin{theorem}\label{nonconjugacy}
If condition (A) holds, then the two stretches of $\rho_0$ associated to functions $f_1$
and $f_2$ are $\diffb{r}$ conjugate  only if $f_1$ and $f_2$ are equivalent as stretching
functions.
\end{theorem}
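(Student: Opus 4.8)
The plan is to reduce the statement to the common regularity lemma. Suppose $\rho_{f_1}$ and $\rho_{f_2}$ are $\diffb{r}$ conjugate; we must show $f_1$ and $f_2$ are equivalent stretching functions. First I would set up notation: by Definition~\ref{stretch}, $\rho_{f_i}=\Phi_{f_i}^*(\rho_0)$, so $\Phi_{f_i}:M\to M$ is a stretching map intertwining $\rho_{f_i}$ and $\rho_0$, that is, $\Phi_{f_i}$ is $(\rho_{f_i},\rho_0)$ equivariant. Each $\Phi_{f_i}$ is a homeomorphism which is $\diffb{r}$ (but not a $\diffb{r}$ diffeomorphism in general). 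A $\diffb{r}$ conjugacy $\Psi$ between $\rho_{f_1}$ and $\rho_{f_2}$ gives, by composition, a homeomorphism $\Phi_{f_2}\circ\Psi:M\to M$ that is $(\rho_{f_1},\rho_0)$ equivariant and $\diffb{r}$.

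Now I would invoke the common regularity lemma with $\rho_1=\rho_{f_1}$, $\rho_2=\rho_0$, taking $\Phi_0=\Phi_{f_1}$ and $\Phi=\Phi_{f_2}\circ\Psi$: both are $(\rho_{f_1},\rho_0)$ equivariant homeomorphisms, and $\Phi$ is $\diffb{r}$, so the lemma forces $\Phi_{f_1}$ to be $\diffb{r}$ — which it already is — but more usefully, applied the other way (with $\Phi_0=\Phi_{f_2}\circ\Psi$ and $\Phi=\Phi_{f_1}$), it shows that these two equivariant homeomorphisms differ by $\rho_0(z_0)$ for some $z_0\in Z(G)$. Hence $\Phi_{f_1}=\rho_0(z_0)\circ\Phi_{f_2}\circ\Psi$ on all of $M$. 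Since $\rho_0(z_0)$ and $\Psi$ are $\diffb{r}$ diffeomorphisms of $M$, the composition $\Phi_{f_1}^{-1}\circ\rho_0(z_0)\circ\Phi_{f_2} = \Psi^{-1}$ would be a $\diffb{r}$ diffeomorphism on $\Int(M)$, but the point is rather to compare $\Phi_{f_1}$ and $\Phi_{f_2}$ directly: up to the diffeomorphism $\rho_0(z_0)$ on the left and $\Psi$ on the right, $\Phi_{f_1}$ and $\Phi_{f_2}$ are the same map.

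It remains the local computation near $\partial M$. Fix $p\in\partial M$. By Proposition~\ref{construction}, in suitable coordinates $\Phi_{f_i}$ reads $(x,y)\mapsto(x,f_i(y))$. Since $\Phi_{f_1}$ and $\Phi_{f_2}$ agree up to diffeomorphisms on the left and right — and these diffeomorphisms preserve $\partial M$ hence send a defining function for $\partial M$ to a defining function — comparing the two coordinate expressions shows that the transition in the $y$-variable is, up to $\diffb{r}$ changes of coordinates fixing $\{y=0\}$, the map $f_2^{-1}\circ f_1$ (after factoring out the $x$-dependence, which is absorbed by the diffeomorphisms since both maps fix the $x_i$). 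More carefully: write $\Phi_{f_1} = A\circ\Phi_{f_2}\circ B$ with $A=\rho_0(z_0)$, $B=\Psi$ both $\diffb{r}$ diffeomorphisms; restricting to a collar and using that $\Phi_{f_2}$ has the normal form $(x,y)\mapsto(x,f_2(y))$, one extracts a $\diffb{r}$ diffeomorphism of a half-neighbourhood of $0$ in $[0,1[$ conjugating $f_1$ to $f_2$ in the sense that $g^{-1}f_1 = $ (diffeomorphism), i.e.\ $f_1$ and $f_2$ are equivalent stretching functions per Definition~\ref{stretch_func}.

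The main obstacle is this last bookkeeping step: one must check that the left and right diffeomorphisms, which a priori mix the $x$ and $y$ coordinates, genuinely descend to a one-variable diffeomorphism relating $f_1$ and $f_2$ and do not, say, introduce $x$-dependence into the comparison of $f_1$ and $f_2$. This works because both $\Phi_{f_1}$ and $\Phi_{f_2}$ fix the $x_i$ and because a diffeomorphism of $M$ preserving $\partial M$ has a well-defined induced action on the normal jet; restricting the identity $\Phi_{f_1}=A\circ\Phi_{f_2}\circ B$ to $\partial M$ pins down the $x$-components and leaves a clean identity on the transverse coordinate. Once that is in place, $f_1$ and $f_2$ are equivalent and the theorem follows.
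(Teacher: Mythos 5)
Your proof is correct and follows essentially the same route as the paper: both rest on the common regularity lemma to show that any conjugacy between the two stretches must agree, up to $\rho(z_0)$ with $z_0\in Z(G)$, with the canonical topological conjugacy built from the stretching maps, and then read off near the boundary that $f_2^{-1}f_1$ is a diffeomorphism. The paper merely packages this more economically by taking $\Phi_0=\Phi_{f_2^{-1}f_1}$ directly as a $(\rho_{f_1},\rho_{f_2})$-equivariant map (so that the final bookkeeping reduces to "a stretching map is a diffeomorphism iff its stretching function is trivial"), whereas you route the comparison through $\rho_0$; the substance is the same.
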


\begin{proof}
The map $\Phi_0=\Phi_{f_2^{-1}f_1}$ topologically conjugates the two stretches, denoted by
$\rho_1$ and $\rho_2$. Moreover, it is not a conjugacy unless $f_1$ and $f_2$ are equivalent
(read in charts near the boundary).

Let $\Phi:M\to M$ be any $(\rho_1,\rho_2)$ equivariant homeomorphism. 
The common regularity lemma,
applied to $\Phi$ and $\Phi_0$ or to $\Phi^{-1}$ and $\Phi_0^{-1}$, implies that $\Phi$ is
not a $\diffb{r}$ diffeomorphism unless $\Phi_0$ is. Therefore, either $f_1$ and $f_2$
are equivalent or there exists no $\diffb{r}$ conjugacy between $\rho_1$ and $\rho_2$.
\end{proof}

As a striking consequence, we get the following.
\begin{corollary}\label{infinity}
If condition (A) holds, then $\moduli^\omega(\rho_0)$ is at least countably infinite.
\end{corollary}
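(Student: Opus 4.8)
The plan is to deduce Corollary \ref{infinity} directly from Theorem \ref{nonconjugacy} by exhibiting countably many pairwise inequivalent real-analytic stretching functions and checking that the stretches they produce are genuinely in $\moduli^\omega(\rho_0)$. First I would recall, as noted after Definition \ref{stretch_func}, that in the real-analytic setting every stretching function is equivalent to $y\mapsto y^p$ for some $p\in\mN$, and conversely each $f_p(y)=y^p$ is a bona fide stretching function: it is an orientation-preserving homeomorphism of $[0,1]$, analytic with analytic inverse on $]0,1]$, and $f_p/f_p' = y/p$ extends analytically at $0$. So $\{f_p : p\geq 1\}$ is a countable family of stretching functions.

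Next I would check that these functions are pairwise inequivalent: $f_q^{-1}f_p(y)=y^{p/q}$, which is a diffeomorphism of $[0,1]$ onto itself only when $p=q$ (for $p<q$ the derivative blows up at $0$, for $p>q$ it vanishes at $0$). Hence for $p\neq q$ the functions $f_p$ and $f_q$ are not equivalent. By Proposition \ref{construction} and Definition \ref{stretch}, each $f_p$ gives a stretch $\rho_{f_p}=\Phi_{f_p}^*(\rho_0)$, which by Theorem \ref{regularity} is a $\diffb{\omega}$ action on $M$, and which is topologically conjugate to $\rho_0$ since $\Phi_{f_p}$ is a homeomorphism. Therefore each class $[\rho_{f_p}]$ lies in $\moduli^\omega(\rho_0)$.

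Finally, invoking condition (A) and Theorem \ref{nonconjugacy}: if $[\rho_{f_p}]=[\rho_{f_q}]$ in $\moduli^\omega(\rho_0)$, i.e. the two stretches are $\diffb{\omega}$ conjugate, then $f_p$ and $f_q$ must be equivalent, forcing $p=q$. Thus $p\mapsto [\rho_{f_p}]$ is an injection of $\mN$ into $\moduli^\omega(\rho_0)$, which shows the moduli set is at least countably infinite. (Whether it is exactly countable is not claimed here, and would require the converse direction — that every analytic compactification is a stretch — which is the content of the main theorem, not of this corollary.)

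I do not expect a serious obstacle: the corollary is essentially a packaging of Theorem \ref{nonconjugacy} once one notices the explicit family $y^p$. The only point deserving a line of care is verifying that $y^p$ satisfies all three clauses of Definition \ref{stretch_func} — in particular clause (3) about $f/f'$ — and that distinct powers are genuinely inequivalent; both are immediate. One could alternatively phrase the argument abstractly: the equivalence classes of analytic stretching functions are indexed by $\mN$ (the order of vanishing at $0$), and Theorem \ref{nonconjugacy} gives an injection from this index set into $\moduli^\omega(\rho_0)$.
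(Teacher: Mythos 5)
Your proof is correct and is precisely the argument the paper intends: the corollary is stated as an immediate consequence of Theorem \ref{nonconjugacy}, with the countable family $y\mapsto y^p$ of pairwise inequivalent analytic stretching functions (each yielding a $\diffb{\omega}$ stretch by Theorem \ref{regularity}) supplying the injection of $\mN$ into $\moduli^\omega(\rho_0)$. Your added verifications of Definition \ref{stretch_func} for $y^p$ and of pairwise inequivalence are exactly the details the paper leaves implicit.
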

We will see in the next section some examples where $\moduli^\omega(\rho_0)$ is countable.
It would be interesting to determine if there exist actions $\rho_0$ for which it is
uncountable.

\subsection{Semi-conjugacy as an ordering}

The common regularity lemma can be used to define a natural partial order on
$\moduli^r(\rho_0)$.
\begin{definition}
Let $[\rho_1],[\rho_2]$ be two elements of $\moduli^r(\rho_0)$. We say that
$[\rho_2]$ is \defini{tighter} than $[\rho_1]$ and we write $[\rho_1]\succ^r[\rho_2]$
(sometimes forgetting the brackets or the $r$) if  $\rho_1$ is $\diffb{r}$
semi-conjugate to $\rho_2$.
\end{definition}

Of course, this definition is consistent: the relation holds or not regardless of the
choice of a representative in each conjugacy class.

\begin{proposition}\label{ordering}
If (A) holds, then the relation $\succ^r$ defines a partial order on $\moduli^r(\rho_0)$.
\end{proposition}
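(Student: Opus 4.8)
The plan is to verify the three axioms of a partial order for $\succ^r$ on $\moduli^r(\rho_0)$: reflexivity, antisymmetry, and transitivity. Reflexivity is immediate, since the identity map is a $\diffb{r}$ semi-conjugacy from $\rho_1$ to itself. Transitivity is also essentially formal: if $\rho_1$ is $\diffb{r}$ semi-conjugate to $\rho_2$ via a homeomorphism $\Phi$ that is a $\diffb{r}$ map, and $\rho_2$ is $\diffb{r}$ semi-conjugate to $\rho_3$ via $\Psi$ with the same properties, then $\Psi\circ\Phi$ is a homeomorphism (composition of homeomorphisms) and a $\diffb{r}$ map (composition of $\diffb{r}$ maps), and it is $(\rho_1,\rho_3)$ equivariant by the obvious computation. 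Note that neither of these two axioms requires condition (A).

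The substance of the statement is antisymmetry: if $[\rho_1]\succ^r[\rho_2]$ and $[\rho_2]\succ^r[\rho_1]$, then $[\rho_1]=[\rho_2]$. Here is where the common regularity lemma enters. Suppose $\Phi$ is a $(\rho_1,\rho_2)$ equivariant homeomorphism that is a $\diffb{r}$ map, and $\Psi$ is a $(\rho_2,\rho_1)$ equivariant homeomorphism that is a $\diffb{r}$ map. Then $\Psi\circ\Phi$ is a $(\rho_1,\rho_1)$ equivariant homeomorphism which is a $\diffb{r}$ map; applying the common regularity lemma with $\rho_1$ in both slots, taking $\Phi_0=\mathrm{id}$ (which is certainly a $\diffb{r}$ diffeomorphism), we conclude that $\Psi\circ\Phi$ is itself a $\diffb{r}$ diffeomorphism. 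The same argument applied to $\Phi\circ\Psi$ shows it too is a $\diffb{r}$ diffeomorphism. It then follows that $\Phi$ is a $\diffb{r}$ diffeomorphism: it is a $\diffb{r}$ bijection whose inverse $\Phi^{-1}=(\Psi\circ\Phi)^{-1}\circ\Psi$ is a composition of $\diffb{r}$ maps, hence $\diffb{r}$. Therefore $\Phi$ realizes a $\diffb{r}$ conjugacy between $\rho_1$ and $\rho_2$, i.e. $[\rho_1]=[\rho_2]$.

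One point to be careful about is the precise hypotheses of the common regularity lemma: it requires both $\rho_1$ and $\rho_2$ to be $\diffb{r}$ actions topologically conjugate to $\rho_0$. Every element $[\rho]$ of $\moduli^r(\rho_0)$ is by definition represented by a $\diffb{r}$ action topologically conjugate to $\rho_0$, so this is satisfied; and in the antisymmetry argument the common slot is filled by $\rho_1$ itself, which is such an action. I also need to record that the semi-conjugacy relation is independent of the chosen representatives: if $\rho_1'=\Theta_1^*\rho_1$ and $\rho_2'=\Theta_2^*\rho_2$ for $\diffb{r}$ diffeomorphisms $\Theta_i$, and $\Phi$ is a $\diffb{r}$ semi-conjugacy from $\rho_1$ to $\rho_2$, then $\Theta_2^{-1}\circ\Phi\circ\Theta_1$ is a $\diffb{r}$ semi-conjugacy from $\rho_1'$ to $\rho_2'$. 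The remark following the definition already asserts this, so I would simply invoke it. The only genuinely nontrivial ingredient is the common regularity lemma — and hence condition (A) — which is exactly what forces $\Psi\circ\Phi$ to upgrade from a bare $\diffb{r}$ map to a $\diffb{r}$ diffeomorphism; without it, antisymmetry could fail.
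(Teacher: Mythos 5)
Your overall strategy coincides with the paper's: reflexivity and transitivity are formal, and antisymmetry is the only place where condition (A) enters, via the common regularity lemma. The difference is in how that lemma is applied. The paper applies it directly to the pair of $(\rho_1,\rho_2)$-equivariant homeomorphisms $\Phi$ and $\Psi^{-1}$: since $\Phi$ is $\diffb{r}$, so is $\Psi^{-1}$, whence $\Psi$ is a $\diffb{r}$ diffeomorphism and realizes the conjugacy. You instead apply it to the $(\rho_1,\rho_1)$-equivariant pair $(\Psi\circ\Phi,\mathrm{id})$, and as written this step slightly over-reads the lemma: its stated conclusion is only that $\Psi\circ\Phi$ is $\diffb{r}$ if and only if $\mathrm{id}$ is, which you already know, and it does not literally assert that $\Psi\circ\Phi$ is a \emph{diffeomorphism}. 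Two one-line repairs are available: either apply the lemma to the pair $\bigl((\Psi\circ\Phi)^{-1},\mathrm{id}\bigr)$ --- the inverse is again $(\rho_1,\rho_1)$-equivariant, so it is $\diffb{r}$, and then $\Psi\circ\Phi$ is a homeomorphism that is $\diffb{r}$ with $\diffb{r}$ inverse, hence a $\diffb{r}$ diffeomorphism --- or invoke the stronger conclusion actually established in the lemma's proof, namely $\Phi=\rho_2(z_0)\Phi_0$ with $z_0\in Z(G)$, which with $\Phi_0=\mathrm{id}$ gives $\Psi\circ\Phi=\rho_1(z_0)$, manifestly a $\diffb{r}$ diffeomorphism. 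With either repair, your deduction that $\Phi^{-1}=(\Psi\circ\Phi)^{-1}\circ\Psi$ is $\diffb{r}$, hence that $\Phi$ is a $\diffb{r}$ conjugacy, goes through. The paper's route is marginally shorter because it avoids the composite altogether, but both arguments rest on exactly the same ingredient.
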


\begin{proof}
The reflexivity and transitivity of $\succ$ are obvious. Let us show
that $\succ$ is antisymmetric.

Let $[\rho_1]$ and $[\rho_2]$ be elements of $\moduli^r(\rho_0)$ that is,
$\rho_i$ are $\diffb{r}$ actions of $G$ on $M$ that are topologically conjugate to $\rho_0$.
Assume that $[\rho_1]\succ [\rho_2]$ and $[\rho_2]\succ [\rho_1]$. Then there are two
homeomorphisms $\Phi_a$ and $\Phi_b$ of $M$ that are respectively $(\rho_1,\rho_2)$
and $(\rho_2,\rho_1)$ equivariant and are both $\diffb{r}$ maps. But $\Phi_b^{-1}$
is a $(\rho_1,\rho_2)$ equivariant homeomorphism of $M$, and the common regularity lemma
implies that it is $\diffb{r}$. It therefore defines a $\diffb{r}$ conjugacy
between $\rho_1$ and $\rho_2$.
\end{proof}

Given the action $\rho_0$, an interesting question is whether there is a tightest
element in $\moduli^r(\rho_0)$. It would be the ``fundamental action'' to which every other
is semi-conjugate. In the case of the differentiable compactifications of the
 hyperbolic spaces, we proved in \cite{Kloeckner2} the existence of such a tightest compactification.
Moreover, every other compactification is not only semiconjugate to it, but is a stretch
of it. In the next section, we generalize this result to the other non-positively curved
symmetric spaces of rank one.

\section{Non-positively curved symmetric spaces of rank one}

\subsection{Differentiable compactifications}

In this subsection we consider a homogeneous $\diffb{r}$ manifold $X=G/H$, where $G$ is connected.
We denote by $\rho$ the corresponding transitive action. 
\begin{definition}\label{diff_comp}
A $\diffb{r}$ \defini{differentiable compactification} of $X$ is the data of
a manifold with boundary $M$ and a $\diffb{r}$ embedding $\psi:X\to M$ such that:
\begin{enumerate}
\item $\psi(X)=\Int M$,
\item the action $\psi_*\rho$ of $G$ on $\Int M$ extends to a $\diffb{r}$
      action on $M$, which is also denoted by $\psi_*\rho$ when no confusion is possible.
\end{enumerate}
\end{definition}

\begin{definition}
Let $(\psi_1,M_1)$ and $(\psi_2,M_2)$ be two differentiable compactifications 
of $X$ and denote $\rho_i$ ($i=1,2$) the extension of $\psi_{i*}\rho$ to $M_i$.
Then $(\psi_1,M_1)$ and $(\psi_2,M_2)$ are
said to be \defini{equivalent} if there are $\diffb{r}$ diffeomorphisms
$\alpha:X\to X$ and $\beta:M_1\to M_2$ that are respectively $\rho$ and 
$(\rho_1,\rho_2)$-equivariant and such that $\psi_2\,\alpha=\beta\,\psi_1$.
\end{definition}

The introduction of $\alpha$ is natural: a mere change of coordinates on $X$ must not
change the equivalence class of a differentiable compactification. But the
condition $\psi_2\,\alpha=\beta\,\psi_1$ entirely defines $\alpha$ and
the equivariance of $\beta$ implies that of $\alpha$. As a consequence,
two differentiable compactifications of $X$ are equivalent if and only if
the actions $\rho_1$ and $\rho_2$ are conjugate.

\subsection{Recapitulation on rank one symmetric spaces}

The classical reference on negatively curved symmetric spaces is \cite{Mostow}.
See also \cite{Epstein,Goldman} for the complex case, and \cite{Allcock,Salzmann}
for the octonionic case.

Let $\mK$ be either $\mC$, $\mH$ or the nonassociative field $\mO$ and let $k=2,4$ or $8$ be the
dimension of $\mK$ as a real algebra.  Denote
by $\KH^m$ (where $m=2$ if $\mK=\mO$) the $\mK$ hyperbolic space,
$G$ the neutral component of its isometry group and $\rho$ the isometric action of $G$ on $\KH^m$.
Denote by $K$ a maximal compact subgroup of $G$; it is the stabilizer of some point
$x_0$ of $\KH^m$. Denote by $n=km$ the real dimension of $\KH^m$.

The action $\rho$ is transitive and, $\KH^n$ being of rank one, isotropic.
As a consequence no two points
of $\KH^m$ have the same stabilizer and $\rho$ satisfies condition
(A).

As a homogeneous space, $\KH^m$ can be identified with an open ball in the projective space $\KP^m$ in such a way
that $\rho$ extends to a real-analytic projective action. The boundary of $\KH^m$ in
$\KP^m$ can be canonically identified with its geodesic boundary $\partial\KH^m$,
defined as the
space of asymptote classes of geodesic rays. The group $K$
act transitively on this boundary.

As a consequence, this embedding into $\KP^m$ gives a $\diffb{\omega}$ 
differentiable compactification $\KH^m\to \overline{\KH}^m$; the
corresponding action of $G$ will be denoted by $\overline{\rho}$ and is called
the projective compactification.

The intersection of any $\mK$-projective lines with $\KH^m$ is a totally geodesic
submanifold of $\KH^n$ isometric to $\RH^k$ (up to a constant that makes it of curvature $-4$).
 These are called the $\mK$-lines of
$\KH^m$, and $G$ acts transitively on the set of $\mK$-lines.
The geodesic boundary of
a $\mK$-line is a properly embedded $(k-1)$-dimensional sphere in $\partial\KH^m$.

Let $\ell$ be a $\mK$-line: the geodesic symmetry around $\ell$ is then a direct isometry.
Moreover, any direct isometry of $\ell$ extends to a direct isometry of the whole
of $\KH^m$, thus the group of direct isometries of $\ell$ can be considered a
subgroup $G_\ell$ of $G$. The restriction of $\overline{\rho}$ to $\overline{\ell}$ and
$G_\ell$ is analytically conjugate to the conformal compactification (see \cite{Kloeckner2}
and below) of $\RH^k$.

Each geodesic is also contained in a totally real $\RH^m$, which is 
a totally geodesic $m$-dimensional submanifold isometric to $\RH^m$ (with curvature $-1$),
whose tangent space at each point is totally real. A totally real
$\RH^m$ is obtained by moving the canonical embedding $\RP^m\to\KP^m$ by
an element of $G$. Any direct isometry of $\lambda$, a totally real $\RH^n$,  extends to 
an element of $G$, therefore the group of direct isometries of $\lambda$ can be considered as a
subgroup $G_\lambda$ of $G$. The restriction of $\overline{\rho}$ to $\overline{\lambda}$ and
$G_\lambda$ is analytically conjugate to the projective compactification (see \cite{Kloeckner2}
and below) of $\RH^k$.

\subsection{The projective and conformal compactifications of the real hyperbolic space}

The projective compactification of $\RH^n$ is defined like that of $\KH^n$: it is
the restriction to the closure of Klein's ball $\overline{\RH}^n$ of the projective action 
of $\SOo{1,n}$ on $\RP^n$. In this model geodesics are affine lines, and it plays
a central r\^ole in the proof that every $\diffb{r}$ differentiable compactification
of $\RH^n$ is a stretch of the projective one, provided $n>2$.

The conformal compactification is the continuous extension to the closed ball
of the action of $\SOo{1,n}$ on Poincar\'e's ball. In this model, geodesics are
circle arcs orthogonal to the boundary, this orthogonality making sense since
the Euclidean conformal structure on the closed ball is preserved by the action.
As is the case for all compactifications, the conformal compactification is a stretch of the projective one.
The stretching function can be chosen to be $y\mapsto y^2$, and this can be seen
by constructing Poincar\'e's ball from Klein's: one projects the latter
vertically to a hemisphere (here lies the order $2$ term), which is in turn 
stereographically projected to the former.

\begin{figure}[ht]\begin{center}
\includegraphics[width=6cm]{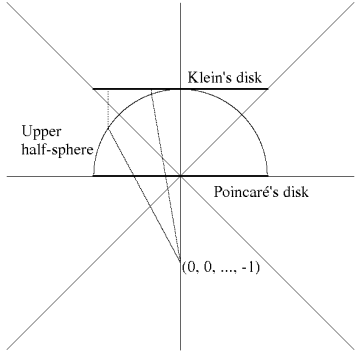}
\caption{Construction of Poincar\'e's ball from Klein's.}
\end{center}\end{figure}

\subsection{Classification of differentiable compactifications}

First note that the following proposition can be proved in the same way as
in the real case (see \cite{Kloeckner2}).
\begin{proposition}
Any differentiable compactification of $\KH^m$ is topologically conjugate to
$\overline{\KH}^m$.
\end{proposition}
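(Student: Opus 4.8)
The plan is to imitate the proof strategy that worked in the real hyperbolic case \cite{Kloeckner2}, adapted to the extra structure coming from $\mK$. The essential point is that any $\diffb{r}$ compactification $M$ of $\KH^m$ is homeomorphic to a closed ball, with boundary identified to $\partial\KH^m$; this is the preceding proposition. So the data of the compactification is, up to equivalence, a $G$-action on the closed ball that is projectively standard on the interior and extends $\diffb{r}$ to the boundary sphere. I want to show such an action is a stretch of $\overline{\rho}$.

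First I would reduce to a one-dimensional problem transverse to the boundary. Fix a point $\xi\in\partial\KH^m$ and look at the action of its stabilizer; the unipotent radical together with the one-parameter group of "transvections" dilating toward $\xi$ acts on $M$ near $\xi$. Using a geodesic $\gamma$ ending at $\xi$, the transvection flow along $\gamma$ gives, in a collar chart $(x,y)$ with $y$ a boundary-defining function, a vector field of the form $a(x,y)\partial_y + (\text{tangential})$ with $a(x,0)=0$. The behaviour of the compactification is governed by the $1$-jet (and higher jets) of this field in $y$, exactly as in the real case. The plan is: (i) restrict to a $\mK$-line $\ell$ through $\gamma$; by the recapitulation section, $\overline{\rho}$ restricted to $\overline{\ell}$ and $G_\ell$ is the conformal compactification of $\RH^k$, and the restriction of our compactification $M$ to the closure of $\ell$ is \emph{some} $\diffb{r}$ compactification of $\RH^k$; by the real result (valid since $k=2,4,8>1$, but one must check $k>2$ — this is where $\mK=\mC$, $r=\omega$ is needed, and why $r=\infty$ forces $\mK=\mH,\mO$), it is a stretch, say by $f_\ell$. (ii) Similarly restrict to a totally real $\RH^m$ to pin down the stretching function from the projective model. (iii) Show the stretching function obtained does not depend on the choice of $\ell$: this follows from the transitivity of $G$ on $\mK$-lines through a given boundary point together with the fact that the stretching function is a conjugacy invariant (Theorem \ref{nonconjugacy}), plus a connectedness argument on the boundary sphere.

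Having a single well-defined stretching function $f$, I would then build the candidate stretching map $\Phi_f$ of Proposition \ref{construction} and compare $\Phi_f^*(\overline{\rho})$ with the given action on the collar. Both are $\diffb{r}$ actions of $G$ on $M$, both topologically conjugate to $\overline{\rho}$, and by construction they agree "to first order transverse to the boundary". The point is to upgrade this to an actual $\diffb{r}$ conjugacy. Here I would invoke the common regularity lemma: it suffices to produce \emph{one} $(\rho_1,\rho_2)$-equivariant homeomorphism that is $\diffb{r}$, and then every equivariant homeomorphism is. The natural candidate is the "identity on the interior" map, and I must verify it extends $\diffb{r}$ across the boundary once $f$ has been divided out — this is a jet-matching computation using that the full isometry group (not just $G_\ell$) acts, which rigidifies the higher-order transverse behaviour so that no further stretching discrepancy can occur.

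The main obstacle I anticipate is step (iii) combined with the higher-jet rigidity: showing that after correcting by the stretch $f$ read off from $\mK$-lines, there is no residual obstruction in the directions transverse to the $\mK$-lines (the "complex/quaternionic/octonionic" directions of the boundary sphere). In the real case the boundary is a single sphere and the transvection acts on a collar essentially one-dimensionally; here $\partial\KH^m$ carries a contact-type structure and the unipotent radical is non-abelian (Heisenberg-type), so the normal form of the action near the boundary is richer. The argument must exploit that the large group $G$ — in particular the transitivity of $K$ on $\partial\KH^m$ and the abundance of totally geodesic $\mK$-lines and real forms through each point — forces the compactification to be "the same in all directions", reducing everything to the already-solved real one-dimensional situation. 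Making this reduction precise, and checking where $r=\omega$ is genuinely needed versus where $r=\infty$ suffices (the dividing line being $k=2$), is the delicate part.
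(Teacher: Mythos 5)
There is a genuine gap: you are proving the wrong statement. The proposition at hand asserts only a \emph{topological} conjugacy --- that the underlying manifold with boundary of an arbitrary differentiable compactification is the closed ball and that the extended $G$-action is topologically the standard one on $\overline{\KH}^m$. Your very first step takes exactly this for granted (``any $\diffb{r}$ compactification $M$ of $\KH^m$ is homeomorphic to a closed ball, with boundary identified to $\partial\KH^m$; this is the preceding proposition''): there is no such preceding proposition, this \emph{is} the proposition. Everything that follows in your text --- restriction to $\mK$-lines and totally real forms, reading off a stretching function, invoking the common regularity lemma --- is a sketch of Theorem \ref{classification}, which comes later and \emph{uses} the present proposition as input. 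So the argument is circular with respect to the statement you were asked to prove, and the actual content of the proposition is entirely missing.

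What is needed is the purely topological argument from the real case: given a compactification $(\psi,M)$, one must show that every geodesic ray of $\KH^m$ converges in the compact space $M$, that the limit depends only on the asymptote class of the ray (using the action of the stabilizer of a point at infinity, or the fact that asymptotic rays remain at bounded distance and are pushed together by the extended action), and that the resulting extension of $\psi$ to a map $\overline{\KH}^m\to M$ is a continuous, equivariant bijection between compact Hausdorff spaces, hence a homeomorphism and a topological conjugacy with $\overline{\rho}$. None of the differentiable structure, the $\diffb{r}$ hypothesis on the boundary, or the dichotomy between $r=\omega$ and $r=\infty$ should enter at this stage; those belong to the classification theorem, not here.
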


As a consequence, we can identify a differentiable compactification and the
homeomorphism that conjugates it to $\overline{\rho}$. The following definition
is thus equivalent to definition \ref{diff_comp} ($\overline{B}^n$ denotes the closed
ball of dimension $n$).
\begin{definition}
By a $\diffb{r}$ \defini{differentiable compactification} of $\KH^m$ we now mean
a homeomorphism $\Phi:\overline{B}^n\to \overline{\KH}^m$ whose restriction to
the interior is a $\diffb{r}$ diffeomorphism such that the
action $\Phi^*(\overline{\rho})$ is $\diffb{r}$ up to the boundary.
\end{definition}

Recall that $r$ is assumed to be $\infty$ or $\omega$.

\begin{theorem}\label{classification}
If $r=\omega$ or $\mK=\mH$ or $\mO$, then
any $\diffb{r}$ compactification of $\KH^m$ is
a stretch of $\overline{\rho}$.
In particular, $(\moduli^\omega(\overline{\rho}),\prec)$ is isomorphic
to $\mN$ endowed with the divisibility ordering.
\end{theorem}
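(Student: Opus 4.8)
The plan is to reduce Theorem \ref{classification} to the real rank-one case treated in \cite{Kloeckner2}, using the abundance of totally geodesic copies of $\RH^k$ inside $\KH^m$ and the common regularity lemma to glue the information together. The key geometric input is already recalled in the excerpt: every $\mK$-line $\ell\subset\KH^m$ carries the action of $G_\ell\subset G$, and on $\overline\ell$ the restriction of $\overline\rho$ is analytically conjugate to the \emph{conformal} compactification of $\RH^k$; every totally real $\lambda\cong\RH^m$ carries the action of $G_\lambda$, and on $\overline\lambda$ the restriction of $\overline\rho$ is analytically conjugate to the \emph{projective} compactification of $\RH^m$. The main theorem of \cite{Kloeckner2} applies to both of these, since $k\geq 2$ and, when $r=\infty$, $\mK=\mH$ or $\mO$ forces $k\geq 4>2$, while for $r=\omega$ no dimension restriction is needed beyond $k>2$ — this is exactly where the hypothesis ``$r=\omega$ or $\mK=\mH$ or $\mO$'' enters.

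First I would fix an arbitrary $\diffb{r}$ compactification, presented as a homeomorphism $\Phi:\overline B^n\to\overline{\KH}^m$ with $\Phi^*(\overline\rho)$ of class $\diffb{r}$; write $\rho_0=\overline\rho$ and $\rho_1=\Phi^*(\overline\rho)$, both $\diffb r$ actions of $G$ on $\overline B^n$ topologically conjugate to each other. The goal is to produce a stretching function $f$ and a stretching map $\Phi_f$ with $\Phi_f^*(\rho_0)$ conjugate to $\rho_1$; equivalently, one wants to show $\Phi$ agrees, near the boundary and in suitable charts, with the model map $(x,y)\mapsto(x,f(y))$ after composing with diffeomorphisms. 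I would first extract $f$ itself: restrict everything to a single $\mK$-line $\ell$ and its boundary sphere $S^{k-1}\subset\partial\KH^m$. The subset $\Phi^{-1}(\overline\ell)$ need not be smooth a priori, but the $G_\ell$-action on it is topologically conjugate to the conformal compactification of $\RH^k$; applying the classification of \cite{Kloeckner2} in the conformal model gives that this restricted compactification is a stretch, hence yields a well-defined equivalence class of stretching function $f$. One checks $f$ does not depend on the chosen $\mK$-line, because $G$ acts transitively on $\mK$-lines and conjugation by elements of $G$ preserves the class of $f$.

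The heart of the argument is then upgrading this ``one stretching function works on every line'' statement to a global conclusion on $\overline B^n$. Here I would compare $\Phi$ with the stretching map $\Phi_f$ built from this $f$ via Proposition \ref{construction}: both $\Phi_f^*(\rho_0)$ and $\rho_1$ are $\diffb r$ actions of $G$ on $M=\overline B^n$ topologically conjugate to $\rho_0$, so the common regularity lemma applies to any $(\Phi_f^*\rho_0,\rho_1)$-equivariant homeomorphism $\Psi$. The natural candidate for $\Psi$ is $\Phi\circ\Phi_f$ (composing a stretch in with the given compactification), and one must show it is a $\diffb r$ diffeomorphism up to the boundary. On the interior this is automatic; near a boundary point $p$, in the adapted charts of Proposition \ref{construction} the map $\Phi$ becomes, in the $y$-direction, something whose radial behaviour is governed by $f$ along every $\mK$-line through $p$ and every totally real $\RH^m$ through $p$, and these sweep out the whole normal direction as well as enough tangential directions; combined with the Hadamard-type factorisations used in the proof of Theorem \ref{regularity}, this forces $\Phi\circ\Phi_f$ to be $\diffb r$ transversally to $\partial M$ and hence $\diffb r$ overall. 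Then the common regularity lemma shows every $(\Phi_f^*\rho_0,\rho_1)$-equivariant homeomorphism — in particular the conjugacy we seek — is a $\diffb r$ conjugacy, so $\rho_1$ is conjugate to $\Phi_f^*(\rho_0)$, i.e. $\rho_1$ is a stretch of $\rho_0$.

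The main obstacle I anticipate is precisely this last transversality step: knowing that $\Phi$ restricts to a stretch on a family of totally geodesic submanifolds does not \emph{formally} give smoothness of $\Phi$ on the ambient manifold, because the union of these submanifolds, while covering $M$, does so with tangencies, and one must argue that the normal jets of $\Phi$ at $\partial M$ are controlled by their restrictions to $\overline\ell$ and $\overline\lambda$. I would handle this by working in the charts of Proposition \ref{construction}, writing $\Phi\circ\Phi_f$ in coordinates $(x_1,\dots,x_{n-1},y)$, and showing its $y$-derivatives of all relevant orders at $y=0$ are smooth in $x$: the $\mK$-lines provide $k-1$ independent tangential directions at $p$ along which the constraint is the real-rank-one result, the totally real $\RH^m$ provides the remaining ones together with the radial direction, and $G$-equivariance propagates the estimate across $\partial M$. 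Once $\Phi\circ\Phi_f$ is known smooth (resp. analytic) up to the boundary and a diffeomorphism there, everything else is formal. The final sentence of Theorem \ref{classification} — that $(\moduli^\omega(\overline\rho),\prec)$ is $\mN$ with divisibility — then follows by combining Theorem \ref{nonconjucacy}, which separates inequivalent stretches under condition (A) (valid here since $\KH^m$ is a symmetric space), with the observation that the real-analytic stretching functions are exactly the $y\mapsto y^p$, $p\in\mN$, up to equivalence, and that $\rho_{f}$ is semi-conjugate to $\rho_{g}$ iff $g^{-1}f$ is smooth iff the exponent of $g$ divides that of $f$.
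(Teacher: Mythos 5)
Your overall strategy coincides with the paper's (restrict to $\mK$-lines and totally real copies of $\RH^m$, invoke the classification of \cite{Kloeckner2}, and use condition (A) via Theorem \ref{nonconjugacy} for the ordering statement), and your identification of where the hypothesis ``$r=\omega$ or $\mK=\mH,\mO$'' enters is correct. But there are two genuine gaps. The first is the point you flag and then walk past: you say $\Phi^{-1}(\overline\ell)$ ``need not be smooth a priori'' and nevertheless apply the classification of \cite{Kloeckner2} to the $G_\ell$-action on it. That classification applies to \emph{differentiable} compactifications, i.e.\ to $\diffb{r}$ actions on $\diffb{r}$ manifolds with boundary; a topological conjugacy to the conformal compactification of $\RH^k$ is not enough input. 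The paper closes this gap before anything else: the geodesic symmetry $g$ around $\ell$ lies in $G$, so $\rho_1(g)$ is a $\diffb{r}$ involution of $\overline B^n$ whose fixed-point set is $\overline{\Phi^*\ell}$; since its differential has eigenvalues $1$ and $-1$ with multiplicities $k$ and $n-k$ at every fixed point, the implicit function theorem shows this fixed set is a $\diffb{r}$ submanifold transverse to $\partial M$. Only then is the restriction a bona fide $\diffb{r}$ compactification of $\RH^k$.

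The second gap is the globalization step, which you yourself identify as the main obstacle but resolve only by an announced jet analysis (``the $y$-derivatives of all relevant orders at $y=0$ are smooth in $x$''), which is not a proof: as you note, restrictions to a family of submanifolds do not formally control transverse derivatives, and your detour through $\Phi\circ\Phi_f$ and the common regularity lemma does not supply the missing estimate. The paper sidesteps the difficulty entirely by choosing coordinates adapted to the symmetry: in polar coordinates $(x,y)\in S_p\KH^m\times[0,\infty)$ about an interior point $p$, every $\mK$-line through $p$ is a union of radial rays, so once $\Phi$ has the form $(x,y)\mapsto(x,\sqrt{f(y)})$ on one such line, equivariance under $K=\stab(p)$ (which acts transitively on directions at $p$) forces the \emph{same} formula on a full neighborhood of the boundary --- no transverse regularity needs to be proved separately, and the common regularity lemma is not needed for this theorem (only for the non-conjugacy of distinct stretches). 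After that, restricting the already-global formula to a totally real $\overline\lambda$ (now automatically transverse, which is why $m\geq 3$ is not required) and applying the real classification once more shows $\sqrt{f}$ is itself a stretching function; your proposal omits this square-root bookkeeping coming from the fact that $\overline\ell$ carries the \emph{conformal}, not projective, compactification of $\RH^k$. Your final paragraph on $(\moduli^\omega(\overline\rho),\prec)\cong(\mN,\,|\,)$ is essentially the paper's argument and is fine.
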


We thus get the very same result as in the real case: the projective compactification
is in each case the tightest differentiable compactification. The same result
may hold in lower regularity, and could be checked by a computation of the action
of $G$ in Klein coordinates. Concerning the $\mK=\mC$, $r=\infty$ case, the result
holds if we ask that the whole group of isometries (and not only its neutral component)
acts smoothly. Otherwise it is open: the following proof relies heavily on the classification
of differentiable compactifications of $\RH^k$, and the $k=2$, $r=\infty$ case is open.

\begin{proof}
Let $\Phi$ be a $\diffb{r}$ compactification of $\KH^m$ and $\rho_1=\Phi^*(\overline{\rho})$.

We fix a point $p\in\KH^m$ and put coordinates $(x,y)\in S_p\KH^m\times[0,\infty)$ on
$\overline{\KH^m}\setminus\{p\}$.

Let $\ell$ be any $\mK-$line of $\KH^m$ through $p$. Let $g$ be the geodesic symmetry
around $\ell$. Then $\rho_1(g)$ has eigenvalues $1$ and $-1$ with multiplicities
$k$ and $mk-k$ at any point of $\Phi^*\ell$, thus also at points of $\overline{\Phi^*\ell}$.
By the implicit function theorem, $\overline{\Phi^*\ell}$ is thus a $\diffb{r}$ submanifold
of $\overline{B}^n$ that is transverse to the boundary.
The restriction of $\rho_1$ to $\overline{\Phi^*\ell}$ and
$G_\ell$ is then a $\diffb{r}$ compactification of $\RH^k$. Due to the classification
of such compactifications and since $\overline{\ell}$ is
the conformal compactification of $\RH^k$, the restriction of $\Phi$ to $\overline{\Phi^*\ell}$
is almost a stretch: in coordinates $(x,y)\in S_p\ell\times[0,1)$ it can be written as
$(x,y)\mapsto (x,\sqrt{f(y)})$ where $f$ is a stretching map.

But by equivariance $\Phi$ can be written in this form on a neighborhood of the whole boundary
(just let the stabilizer $K$ of $p$ act).
If we look at the restriction of $\Phi$ to the closure of a totally real line $\overline{\lambda}$,
it can be written as $(x,y)\mapsto (x,\sqrt{f(y)})$ in coordinates $(x,y)\in S_p\lambda\times [0,1)$ and
defines a $\diffb{r}$ compactification of $\RH^m$. Due to the classification in the real case, 
$\sqrt{f}$ itself must be a stretching function, and $\Phi$ is a stretching map.

Note that we do not need $m\geq 3$ since
$\overline{\lambda}$ is necessarily transverse to the boundary, and so are its geodesics.
\end{proof}

In the case of $\CH^m$, we expect the projective compactification to be the only one to preserve
a complex structure. More generally, one can ask which compactifications of a rank one
symmetric space preserve any geometric structure at all (with Cartan's definition of
a geometric structure for example).

\section{Prospectus}

Let us stress some limitations of this work that lead to interesting questions.

First, we would like to get rid of condition (A). Without it,
can two non-equivalent stretching maps lead to equivalent actions ? The
existence of a dense open orbit will of course be of primary importance.

Second, the notion of stretch could be used when $M$ has no boundary but a
dense open orbit whose complement is a $1$-codimensional submanifold, but does not
extend as it is to greater codimension. Could one modify it so that the dimension
of the closed orbit does not matter?

Third, in most cases we are only able to construct new actions from a given one.
Given two subgroups $P,K$ in $G$, we would like to determine the (possibly empty) set
of differentiable action of $G$ that have two orbits, with respective stabilizers
$P$ and $K$.

More generally, could we describe all actions of a given Lie groups that have a finite
number of orbits by explicit combinatorial and analytic data?

\subsubsection*{Acknowledgement}

I wish to thank professors Demailly and Forstneri\v c for useful discussions and
correspondence about the uniqueness of the analytic structure of
a manifold with boundary.

\bibliographystyle{plain}
\bibliography{biblio.bib}

\end{document}